\numberwithin{equation}{section}
\theoremstyle{plain}
\newtheorem{lemma}{Lemma}[section]
\newtheorem{proposition}[lemma]{Proposition}
\newtheorem{proposition/definition}[lemma]{Proposition/Definition}
\newtheorem{theorem}[lemma]{Theorem}
\theoremstyle{definition}
\newtheorem{remark}[lemma]{Remark}
\DeclareMathOperator{\id}{id}
\newcommand{\calD}{\mathcal{D}}
\newcommand{\calE}{\mathcal{E}}
\newcommand{\calF}{\mathcal{F}}
\newcommand{\calL}{\mathcal{L}}
\newcommand{\calO}{\mathcal{O}}
\newcommand{\calX}{\mathcal{X}}
\newcommand{\bbE}{\mathbb{E}}
\newcommand{\bbQ}{\mathbb{Q}}
\newcommand{\bbR}{\mathbb{R}}
\newcommand{\bbS}{\mathbb{S}}
\newcommand{\bbT}{\mathbb{T}}
\newcommand{\bbZ}{\mathbb{Z}}
\newcommand{\frakB}{\mathfrak{B}}
\newcommand{\frakX}{\mathfrak{X}}
\renewcommand{\theta}{\vartheta}
\renewcommand{\tilde}[1]{\widetilde{#1}}
\title[]{Rigidity of integral coisotropic submanifolds\\ of contact manifolds}
\author{Alfonso Giuseppe Tortorella}\thanks{The author is partially supported by GNSAGA of INdAM}
\address{Dipartimento di Matematica e Informatica ``Ulisse Dini'',  Universit\`a degli Studi di Firenze, Viale Morgagni 67/a, 50134 Florence, Italy.}
\email{\href{mailto:alfonso.tortorella@math.unifi.it}{\underline{alfonso.tortorella@math.unifi.it}}}
\begin{document}

\begin{abstract}
	Unlike Legendrian submanifolds, the deformation problem of co\-iso\-tro\-pic submanifolds can be obstructed.
	Starting from this observation, we single out in the contact setting the special class of \emph{integral coisotropic submanifolds} as the direct generalization of Legendrian submanifolds for what concerns deformation and moduli theory.
	Indeed, being integral coisotropic is proved to be a rigid condition, and moreover the integral coisotropic deformation problem is unobstructed with discrete moduli space.	
\end{abstract}

\maketitle

\section{Introduction}
\label{sec:introduction}

In symplectic geometry, in consequence of the Lagrangian Neighborhood Theorem~\cite{weinstein1971symplectic}, it is well-known that the deformation problem under Hamiltonian equivalence of a compact Lagrangian submanifold $S$ is controlled by its de Rham complex, so that it is unobstructed with local moduli space given by $H^1_{dR}(S)$.
Unlike Lagrangian submanifolds the deformation problem of coisotropic submanifolds is much more involved and hard to manage.
Indeed in~\cite{oh2005deformations} the coisotropic deformation problem is proved to be controlled by an $L_\infty$-algebra, rather than a dg-space.
Moreover the coisotropic deformation problem can be obstructed as explicitly shown in~\cite{zambon2008example}.
However, as pointed out in~\cite{ruan2005deformation}, there is the still interesting class of integral coisotropic submanifolds, whose deformation theory resembles that one of Lagrangian submanifolds.
Indeed the integral coisotropic deformation problem, under Hamiltonian equivalence, is unobstructed, with linear and finite-dimensional local moduli space.

It seems that the contact version of this picture has been only partially unveiled.
Our note aims to fill in these gaps.
In doing this we take advantage of the \emph{line bundle approach} to precontact geometry presented in~\cite{vitagliano2015dirac}.

In contact geometry, in consequence of the Legendrian Neighborhood Theorem~\cite{L1998}, it is well-known that compact Legendrian submanifolds are rigid, i.e.~locally their smooth Legendrian deformations are induced by contact isotopies.
Moreover their deformation problem under contact equivalence is controlled by an acyclic dg-space, so that it is unobstructed with discrete local moduli space (cf., e.g.,~\cite[Section~6]{LOTV}).
As recently shown in~\cite{LOTV}, in the contact setting as well, the coisotropic deformation problem is controlled by an $L_\infty$-algebra, rather than an acyclic dg-space.
In this note we will explicitly construct a first example, in the contact setting, of coisotropic submanifold whose deformation problem is obstructed (Section~\ref{subsec:example_1st_part}).
Further we will single out, in the contact setting, the special class of integral coisotropic submanifolds which behave like Legendrian submanifolds for what concerns deformation and moduli theory.
Indeed we prove that compact integral coisotropic submanifolds are rigid, i.e.~all their smooth integral coisotropic deformations are induced by contact isotopies (Theorem~\ref{prop:rigidity_compact_integral_coisotropic}).
Moreover their deformation problem under contact equivalence is unobstructed, with discrete local moduli space (Proposition~\ref{prop:unobstructedness}).

\subsection{Organization of the paper}
\label{subsec:organization}
Closely following~\cite{vitagliano2015dirac} Section~\ref{sec:precontact_geometry} presents the line bundle approach to precontact geometry and its technical prerequisites: the functorial construction of the Atiyah algebroid of a vector bundle, and the associated Cartan calculus.
In Section~\ref{sec:coisotropic_submanifolds}, after recalling the definition of coisotropic submanifold, we exhibit an explicit example in the contact setting when the coisotropic deformation problem is obstructed.
Section~\ref{sec:integral_coisotropic_submanifolds} contains the main results of this paper.
After introducing integral coisotropic submanifolds we stress their close connection with coisotropic contact reduction.
Finally we prove that compact integral coisotropic submanifolds are rigid (Theorem~\ref{prop:rigidity_compact_integral_coisotropic}), and moreover that their deformation problem under contact equivalence is unobstructed, with discrete local moduli space (Proposition~\ref{prop:unobstructedness}).

\section{A line bundle approach to precontact geometry}
\label{sec:precontact_geometry}

Let $C$ be an hyperplane distribution on a manifold $M$.
Fix a line bundle $L\to M$, and a no-where zero $L$-valued $1$-form $\theta\colon TM\to L$.
Assume that $C$ and $\theta$ are related by the condition $\ker\theta=C$, so that $\theta$ induces the line bundle isomorphism $(TM)/C\simeq L$.
Then the associated curvature form $\omega\in\Gamma(\wedge^2 C^\ast\otimes L)$ is defined by $\omega(X,Y)=\theta([X,Y])$, for all $X,Y\in\Gamma(C)$.
The $1$-form $\theta$, and the hyperplane distribution $C$, are said to be \emph{precontact} (resp.~\emph{contact}) if the vector bundle morphism $\omega^\flat\colon C\to C^\ast\otimes L$, $X\mapsto\omega(X,-)$, has constant rank (resp.~is an isomorphism).
A \emph{(pre)contact manifold} is a manifold $M$ equipped with a \emph{(pre)contact structure} which is equivalently given by a (pre)contact distribution $C$ or a (pre)contact $1$-form $\theta$ on $M$.
Any precontact manifold is endowed with a \emph{characteristic foliation} $\calF$, namely the integral foliation of the involutive distribution $\underline{\smash{K}}_\theta:=\ker\omega^\flat$.
By definition the characteristic foliation of a contact manifold is $0$-dimensional. 
%
A \emph{precontactomorphism} of precontact manifolds $(M,C)\to(M',C')$ is a diffeomorphism $\underline{\smash\varphi}\colon M\to M'$ preserving the precontact distributions, i.e.~such that $(T\underline{\smash\varphi})C=C'$.
Hence in particular the Lie subalgebra $\frakX_C\subset\frakX(M)$ of infinitesimal precontactomorphisms of $(M,C)$ consists of those vector fields $X\in\frakX(M)$ such that $[X,\Gamma(C)]\subset\Gamma(C)$.

\subsection{The Atiyah algebroid and the der-complex of a vector bundle}
\label{subsec:Atiyah_algebroid}
Let $E\to M$ be a vector bundle.	
A \emph{derivation of $E\to M$} is an $\bbR$-linear map $\square\colon \Gamma(E)\to\Gamma(E)$ such that there is a  (unique) $X\in\frakX(M)$, called the \emph{symbol} of $\square$ and also denoted by $\sigma(\square)$, satisfying the  Leibniz rule
\begin{equation*}
	\square(fe)=X(f)e+f\square e,
\end{equation*}
for all $e\in\Gamma(E)$ and $f\in C^\infty(M)$.
The $C^\infty(M)$-module of derivations of $E$, denoted by $\calD E$, is actually the module of sections of a vector bundle $DE\to M$.
For any $x\in M$, the fiber $(DE)_x$ consists of the \emph{derivations of $E$ at $x$}, i.e.~those $\bbR$-linear maps $\delta\colon \Gamma(E)\to E_x$ such that there is a (unique) $\xi\in T_xM$, called the \emph{symbol} of $\delta$ and also denoted by $\sigma(\delta)$, satisfying the  Leibniz rule $\delta(fe)=\xi(f)e_x+f(x)\delta e$, for all $e\in\Gamma(E)$ and $f\in C^\infty(M)$.
The rank of $DE\to M$ is given by $\operatorname{rk}(DE)=\dim M+\operatorname{rk}(E)^2$.
Actually if $x^i$ are local coordinates on $M$ and $\varepsilon^a$ is a local frame of $E\to M$, then an adapted local frame $\square_i,\bbE_a^b$ of $DE\to M$ is defined by
\begin{equation*}
	\square_i(f_c\varepsilon^c)=\frac{\partial f_c}{\partial x^i}\varepsilon^c,\qquad\bbE_a^b(f_c\varepsilon^c)=f_a\varepsilon^b.
\end{equation*}  
The vector bundle $DE\to M$ becomes a \emph{transitive} Lie algebroid, called the \emph{Atiyah algebroid} (or \emph{gauge algebroid}) of $E$, with Lie bracket given by the commutator of derivations $[-,-]$, and anchor given by the symbol map $\sigma\colon DE\to TM$, $\delta\mapsto\sigma(\delta)$.
Moreover the \emph{tautological representation} $\nabla$ of $DE$ in $E$ is defined by $\nabla_\square e=\square e$, for all $\square\in\calD E$, and $ e\in\Gamma(E)$.
These data determine a Cartan calculus on the graded module $\Omega_E^\bullet:=\Gamma(\wedge^\bullet(DE)^\ast\otimes E)$ whose elements are called the \emph{$E$-valued Atiyah forms}.
The structural operations of this Cartan calculus are the following:
\begin{itemize}
	\item the \emph{de Rham differential $d_D$}, i.e.~the degree $1$ derivation of $\Omega_E^\bullet$ such that $(d_D e)(\square)=\square e$, and $(d_D\eta)(\square,\Delta)=\square(\eta(\Delta))-\Delta(\eta(\square))-\eta([\square,\Delta])$, for all $ e\in\Omega^0_E\equiv\Gamma(E)$, $\eta\in\Omega_E^1$, and $\square,\Delta\in\calD E$,
	\item for any $\square\in\calD E$, the \emph{contraction $\iota_\square$}, i.e.~the degree $(-1)$ derivation of $\Omega^\bullet_E$ such that $\iota_\square e=0$, and $(\iota_\square\eta)=\eta(\square)$, for all $ e\in\Omega^0_E\equiv\Gamma(E)$, $\eta\in\Omega_E^1$,	
	\item for any $\square\in\calD E$, the \emph{Lie derivative $\calL_\square$}, i.e.~the degree $0$ derivation of $\Omega^\bullet_E$ such that $(\calL_\square e)=\square e$, and $(\calL_\square\eta)(\Delta)=\square(\eta(\Delta))-\eta([\square,\Delta])$, for all $e\in\Omega^0_E\equiv\Gamma(E)$, $\eta\in\Omega_E^1$, and $\Delta\in\calD E$.
\end{itemize}
Furthermore the above operations of the Cartan calculus are related through the following identities:
\begin{equation*}
	\label{eq:Cartan_identities}
	\begin{gathered}{}
	[d_D,\iota_\square]=\calL_\square,\quad 
	[\iota_\square,\calL_\Delta]=\iota_{[\square,\Delta]},\quad [\calL_\square,\calL_\Delta]=\calL_{[\square,\Delta]},\\
	[d_D,d_D]=[d_D,\calL_\square]=[\iota_\square,\iota_\Delta]=0,
	\end{gathered}
\end{equation*}
for all $\square,\Delta\in\calD E$, where $[-,-]$ is the graded commutator.
Actually $(\Omega_E^\bullet,d_D)$, the de Rham complex of the Atiyah algebroid $DE$ with values in its tautological representation in $E$, also called \emph{der-complex of $E$} (see~\cite{rubtsov1980cohomology}), is acyclic.
Indeed, if $\mathbbm{1}\in\calD E$ denotes the identity map, i.e.~$\mathbbm{1}e=e$, then a contracting homotopy is given by $\iota_{\mathbbm{1}}$, i.e. 
\begin{equation}
	\label{eq:contracting_homotopy}
	[d_D,\iota_{\mathbbm{1}}]=\id.
\end{equation}
\begin{remark}
	\label{rem:derivations_of_line bundles}
	Any derivation of $E$ is actually a first order differential operator from $\Gamma(E)$ to itself, so that $DE$ is a vector subbundle of $(J^1E)^\ast\otimes E$.
	The converse holds iff $E$ is a line bundle.
	Fix a line bundle $L\to M$.
	Since $DL=(J^1L)^\ast\otimes L$, now $\Omega_L^1$ identifies with $\Gamma(J^1L)$ via the vector bundle isomorphism $J^1L\overset{\simeq}{\to}(DL)^\ast\otimes L$ induced by the $L$-valued duality pairing $\langle-,-\rangle\colon DL\otimes J^1L\to L$.
	In view of this, $\Omega_L^0\to\Omega_L^1$, $\lambda\mapsto d_D\lambda$, coincides with the $1$-st jet prolongation $\Gamma(L)\to\Gamma(J^1L),\lambda\mapsto j^1\lambda$.
\end{remark}

Fix vector bundles  $E\to M$ and $E'\to M'$.
Let $\varphi\colon E\to E'$ be a vector bundle morphism, covering $\underline{\smash\varphi}\colon M\to M'$.
Assume that $\varphi$ is \emph{regular}, i.e.~it is fiberwise invertible.
Then the pull-back of sections $\varphi^\ast\colon \Gamma(E')\to\Gamma(E)$, $e'\mapsto\varphi^\ast e'$, is defined by $(\varphi^\ast e')_x=(\varphi|_{E_x})^{-1}e'_{\underline{\smash\varphi}(x)}$, for all $x\in M$.
Further a Lie algebroid morphism $D\varphi\colon DE\to DE'$, covering $\underline{\smash\varphi}$, is defined by $((D\varphi)\delta)e'=\varphi(\delta(\varphi^\ast e'))$, for all $\delta\in DE$, and $e'\in\Gamma(E')$.
Since $\varphi$ and $D\varphi$ are compatible with the tautological representations of $DE$ and $DE'$, they determine a degree zero dg-module morphism $\varphi^\ast\colon (\Omega^\bullet_{E'},d_D)\to(\Omega^\bullet_E,d_D)$ explicitly given by
\begin{equation*}
	(\varphi^\ast\eta')_x(\delta_1,\ldots,\delta_k)=(\varphi|_{E_x})^{-1}(\eta'_{\underline{\smash\varphi}(x)}((D\varphi)\delta_1,\ldots,(D\varphi)\delta_k))
\end{equation*}
for all $\eta'\in\Omega_{E'}^k$, $x\in M$, and $\delta_1,\ldots,\delta_k\in (DE)_x$.
Consequently, if $\square\in\calD E$ and $\square'\in\calD E'$ are \emph{$\varphi$-related}, i.e.~$\varphi^\ast\circ\square'=\square\circ\varphi^\ast$, or equivalently $(D\varphi)\circ\square=\square'\circ\underline{\smash\varphi}$, then $\iota_{\square}\circ\varphi^\ast=\varphi^\ast\circ\iota_{\square'}$, and $\calL_{\square}\circ\varphi^\ast=\varphi^\ast\circ\calL_{\square'}$.

\begin{remark}
	\label{rem:restricted_line_bundle}
	Let $E\to M$ be a vector bundle, and let $S\subset M$ be a submanifold.
	Set $E_S:=E|_S\to S$, and consider the regular vector bundle morphism $i_S\colon E_S\to E$, covering $\underline{i}_S\colon S\to M$, given by the inclusion.
	Then the Lie algebroid morphism $Di_S\colon DE_S\to DE$ identifies $DE_S$ with the Lie subalgebroid $\{\delta\in DE\colon\sigma(\delta)\in TS\}\subset DE$, and $i_S^\ast\colon\Omega^\bullet_E\to\Omega^\bullet_{E_S}$ is a dg-module epimorphism.
\end{remark}

Let $\varphi\colon E\to E'$ be a regular vector bundle morphism covering a surjective submersion $\underline{\smash\varphi}\colon M\to M'$ with connected fibers.
An Atiyah form $\eta\in\Omega_{E}^\bullet$ is said to be \emph{basic} wrt $\varphi$ if there is a (unique) Atiyah form $\eta'\in\Omega_{E'}^\bullet$ such that $\eta=\varphi^\ast\eta'$.
\begin{lemma}
	\label{lem:basic_Atiyah_forms}
	An Atiyah form $\eta\in\Omega_{E}^\bullet$ is basic wrt $\varphi$ iff $\iota_\square\eta=\calL_\square\eta=0$, for all $\square\in\Gamma(\ker D\varphi)\subset\calD E$.	
\end{lemma}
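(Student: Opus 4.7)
The plan is to treat the two directions separately. The ``only if'' direction follows immediately from the compatibility of the Cartan calculus with $\varphi$-related derivations recalled in the text: for every $\square\in\Gamma(\ker D\varphi)$ the identity $(D\varphi)\circ\square=0$ is precisely the statement that $\square$ is $\varphi$-related to the zero derivation of $E'$. Hence, if $\eta=\varphi^\ast\eta'$, then $\iota_\square\eta=\varphi^\ast(\iota_0\eta')=0$ and $\calL_\square\eta=\varphi^\ast(\calL_0\eta')=0$.

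For the converse I would argue locally. Around $x_0\in M$, choose submersion coordinates $(x^i,y^a)$ in which $\underline{\smash\varphi}$ becomes the projection $(x,y)\mapsto x$, together with a local frame $\varepsilon^b$ of $E'$ near $\underline{\smash\varphi}(x_0)$; its pullback $\varphi^\ast\varepsilon^b$ is then a local frame of $E$. In the induced adapted frame $\square_{x^i},\square_{y^a},\bbE_c^b$ of $DE$, a direct check shows that $\ker D\varphi$ is locally spanned by the derivations $\square_{y^a}$. Expanding $\eta$ in the corresponding dual basis of $(DE)^\ast\otimes E$, the hypothesis $\iota_{\square_{y^a}}\eta=0$ kills every summand containing a dual generator of $\square_{y^a}$, whereas $\calL_{\square_{y^a}}\eta=0$ forces the surviving coefficients to be independent of the fibre coordinates $y^a$. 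What remains is manifestly the $\varphi$-pullback of a local Atiyah form $\eta'_U$ defined on the image of the chart.

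It remains to glue these local pieces into a global $\eta'\in\Omega^\bullet_{E'}$. Since $\varphi$ is regular and $\underline{\smash\varphi}$ is a submersion, locally $\varphi^\ast$ is injective on Atiyah forms, so each $\eta'_U$ is uniquely determined by $\eta$; on overlapping base charts covered by overlapping trivializing charts of $M$ the local pieces therefore agree automatically. The delicate point is compatibility across a base point $b\in M'$ whose preimage meets several pairwise disjoint trivializing charts. Here the hypothesis of connected fibres becomes essential: one can chain together a finite sequence of overlapping trivializations along a smooth path inside $\underline{\smash\varphi}^{-1}(b)$, and the successive equalities $\eta'_{U_i}=\eta'_{U_{i+1}}$ propagate the compatibility along the entire fibre. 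I expect this gluing across fibres to be the only real subtlety, as without connectedness of fibres the construction would produce distinct descents on different components.
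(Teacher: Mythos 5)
Your proof is correct and amounts to the same argument the paper compresses into ``a straightforward computation in adapted local frames'': the conditions $\iota_\square\eta=\calL_\square\eta=0$ for $\square\in\Gamma(\ker D\varphi)$ (locally spanned by the fibre-direction derivations $\square_{y^a}$) kill the fibre components of $\eta$ and make the surviving coefficients fibrewise constant, while the connectedness of the fibres is exactly what lets the local descents glue to a global $\eta'$ with $\eta=\varphi^\ast\eta'$. The converse direction via $\varphi$-relatedness to the zero derivation is likewise the intended one, so there is nothing to add.
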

\begin{proof}
	It is a straightforward computation in adapted local frames.
\end{proof}
A smooth path $\square_t$ in $\calD E$ generates the smooth $1$-parameter family $\varphi_t$ of local vector bundle automorphisms of $E\to M$, with $\varphi_0=\id_E$, which is uniquely determined by
\begin{equation}
	\label{eq:Lie_derivative_formula_0}
	\frac{d}{dt}\varphi_t^\ast e=\varphi_t^\ast(\square_t e),\ \text{for all}\ e\in\Gamma(E).
\end{equation}
Notice that $\underline{\smash\varphi}_t$, the smooth $1$-parameter family of local diffeomorphisms of $M$ covered by $\varphi_t$, is the flow of the time-dependent vector field on $M$ given by $\sigma(\square_t)$.
Moreover~\eqref{eq:Lie_derivative_formula_0} extends into the following Lie derivative formula
\begin{equation}
	\label{eq:Lie_derivative_formula}
	\frac{d}{dt}\varphi_t^\ast\eta=\varphi_t^\ast(\calL_{\square_t}\eta),\ \text{for all}\ \eta\in\Omega^\bullet_E.
\end{equation}
Conversely, for any smooth $1$-parameter family $\varphi_t$ of local vector bundle automorphisms of $E\to M$, with $\varphi_0=\id_E$, its infinitesimal generator is the time-dependent derivation $\square_t$ of $E\to M$ uniquely determined by~\eqref{eq:Lie_derivative_formula_0}.

\begin{remark}
	\label{rem:homogeneous_deRham_complex}
	The next Section~\ref{subsec:presymplectization} summarizes the line bundle approach to precontact geometry.
	In~\cite{vitagliano2015dirac} this approach was inspired by the ``presymplectization trick'' in view of the existing connection between the der-complex of $E$ and the homogeneous de Rham complex of $E^\ast$ which we are now going to outline.
	Let $E\to M$ be a vector bundle.
	Denote by $\calE\in\frakX(E^\ast)$ the Euler vector field on the total space of the dual vector bundle.
	A canonical $C^\infty(M)$-module embedding $\Gamma(E)\to C^\infty(E^\ast)$, $e\mapsto\tilde e$, is defined by $\phi^\ast\tilde e=\langle\phi,e\rangle$ for all $\phi\in\Gamma(E^\ast)$ and $e\in\Gamma(E)$, where $\langle-,-\rangle$ denotes the duality pairing.
	The image of $\Gamma(E)\to C^\infty(E^\ast)$ consists of those functions $f$ that are \emph{degree one homogeneous}, i.e.~$\calE(f)=f$.
	A vector field $X\in\frakX(E^\ast)$ is said \emph{linear}, or \emph{degree zero homogeneous}, if $[\calE,X]=0$. 
	The space of linear vector fields, denoted by $\frakX_{\textnormal{lin}}(E^\ast)$, is both a $C^\infty(M)$-submodule and a Lie subalgebra of $\frakX(E^\ast)$.
	There is a canonical, $C^\infty(M)$-linear, Lie algebra isomorphism $\calD E\to\frakX_{\textnormal{lin}}(E^\ast)$, $\square\mapsto\tilde{\square}$, defined by $\tilde{\square}\tilde{e}=\tilde{(\square e)}$, for all $\square\in\calD E$, and $e\in\Gamma(E)$.
	Denote by $\Omega_{\textnormal{h}}^\bullet(E^\ast)$ the subcomplex of $(\Omega^\bullet(E^\ast),d_{dR})$ formed by those differential forms $\alpha$ that are \emph{degree one homogeneous}, i.e.~$\calL_{\calE}\alpha=\alpha$.
	Notice that $(\Omega_{\textnormal{h}}^\bullet(E^\ast),d_{dR})$, also called the \emph{homogeneous de Rham complex of $E^\ast$}, is acyclic with a contracting homotopy given by $\iota_{\calE}$.
	Finally there is a canonical (degree zero) dg-module monomorphism $(\Omega_{\textnormal{h}}^\bullet(E^\ast),d_{dR})\to(\Omega_E^\bullet,d_D)$, $\alpha\mapsto\alpha_D$, which intertwines $\iota_{\calE}$ with $\iota_{\mathbbm{1}}$, and is defined by $\tilde{\alpha_D(\square_1,\ldots,\square_k)}=\alpha(\tilde{\square_1},\ldots,\tilde{\square_k})$, for all $\alpha\in\Omega_{\textnormal{h}}^k(E^\ast)$ and $\square_1,\ldots,\square_k\in\calD E$.
	In particular, $\Omega_{\textnormal{h}}^\bullet(E^\ast)\to\Omega_E^\bullet$ is an isomorphism if and only if $E\to M$ is a line bundle.
	For more details about the homogeneous de Rham complex see~\cite[Sec.~4]{vitagliano2015multicontact}.
\end{remark}

\subsection{Precontact geometry as presymplectic geometry on the Atiyah algebroid of a line bundle}
\label{subsec:presymplectization}
Let $L\to M$ be a line bundle, and let $\varpi\in\Omega^2_L$ be such that $\iota_{\mathbbm{1}}\varpi$ is no-where zero.
This $L$-valued Atiyah $2$-form $\varpi$ is said to be \emph{presymplectic} (resp.~\emph{symplectic}) if $\varpi$ is $d_D$-closed, and the vector bundle morphism $\varpi^\flat\colon DL\to(DL)^\ast\otimes L$, $\square\mapsto \varpi(\square,-)$, has constant rank (resp.~is an isomorphism).
Any $L$-valued presymplectic Atiyah form $\varpi$ determines the Lie subalgebroid $K_\varpi:=\ker\varpi^\flat\subset DL$; clearly if $\varpi$ is symplectic then $K_\varpi=0$. 

	\begin{proposition}[{\cite[Proposition~3.3]{vitagliano2015dirac}}]
		\label{prop:Atiyah_presymplectic_forms}
		For any line bundle $L\to M$, the relation $\theta\circ\sigma=\iota_{\mathbbm{1}}\varpi$ establishes a one-to-one correspondence between $L$-valued (pre)contact forms $\theta$ and the $L$-valued (pre)symplectic Atiyah forms $\varpi$.
		Moreover if $\theta$ and $\varpi$ correspond to each other, then the symbol map $\sigma\colon DL\to TM$ induces a Lie algebroid isomorphism from $K_\varpi\subset DL$ to $\underline{\smash K}_\theta\subset TM$.
	\end{proposition}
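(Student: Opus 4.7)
The plan is to construct the two directions of the correspondence explicitly, to obtain uniqueness from the acyclicity of the der-complex, and then to transfer the infinitesimal rank and involutivity data through the symbol map. The crucial structural fact I keep in mind throughout is that for a line bundle the kernel of $\sigma\colon DL\to TM$ is spanned fiberwise by $\mathbbm{1}$, since $\End(L)\simeq C^\infty(M)$, and that $\iota_{\mathbbm{1}}$ is a contracting homotopy for $d_D$.

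Going from $\varpi$ to $\theta$, I would set $\theta(\sigma\square):=\varpi(\mathbbm{1},\square)$. This descends to a well-defined bundle map $\theta\colon TM\to L$ because $\varpi(\mathbbm{1},f\mathbbm{1})=0$ by antisymmetry, and it is nowhere zero exactly when $\iota_{\mathbbm{1}}\varpi$ is. For the converse, given $\theta$, set $\alpha:=\theta\circ\sigma\in\Omega^1_L$; since $\sigma(\mathbbm{1})=0$ one has $\iota_{\mathbbm{1}}\alpha=0$, so the identity $[d_D,\iota_{\mathbbm{1}}]=\id$ forces the closed Atiyah $2$-form $\varpi:=d_D\alpha$ to satisfy $\iota_{\mathbbm{1}}\varpi=\alpha=\theta\circ\sigma$. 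Uniqueness is again by acyclicity: any two closed $\varpi,\varpi'$ with $\iota_{\mathbbm{1}}(\varpi-\varpi')=0$ satisfy $\varpi-\varpi'=[d_D,\iota_{\mathbbm{1}}](\varpi-\varpi')=0$.

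The substance of the proposition is the identification $\sigma\colon K_\varpi\to\underline{\smash{K}}_\theta$ as Lie algebroids. If $\delta\in K_\varpi$ then $\varpi(\delta,\mathbbm{1})=-\theta(\sigma\delta)=0$ gives $\sigma\delta\in C$; testing against $\Delta$ extended locally with $\sigma\Delta\in\Gamma(C)$, the formula $d_D(\theta\circ\sigma)(\delta,\Delta)=\delta(\theta\sigma\Delta)-\Delta(\theta\sigma\delta)-\theta([\sigma\delta,\sigma\Delta])$ reduces at $x$ to $-\omega(\sigma\delta,\sigma\Delta)$, so $\sigma\delta\in\ker\omega^\flat$. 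Conversely, any lift $\delta_0$ of $X\in\underline{\smash{K}}_\theta$ has $\varpi(\delta_0,-)$ vanishing on $\sigma^{-1}(C)$ by the same computation, hence factoring through the line $TM/C\simeq L$ and equal to $c\cdot\theta\circ\sigma$ for a unique scalar $c$; then $\delta_0-c\mathbbm{1}$ lies in $K_\varpi$ and lifts $X$. Injectivity follows because a zero-symbol element of $K_\varpi$ is a multiple of $\mathbbm{1}$ annihilated by $\theta\circ\sigma$, hence zero. The Lie bracket is preserved since $\sigma[\square,\Delta]=[\sigma\square,\sigma\Delta]$, and the rank identities $\rank(\varpi^\flat)=\dim M+1-\rank(K_\varpi)$ and $\rank(\omega^\flat)=\dim M-1-\rank(\underline{\smash{K}}_\theta)$ show that constant-rank of $\varpi^\flat$ is equivalent to constant-rank of $\omega^\flat$, matching (pre)symplectic with (pre)contact.

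The main obstacle I anticipate is the surjectivity step, where one must use the nondegeneracy of $\theta$ on the line $TM/C$ in an essential way to solve for the correction $c\mathbbm{1}$ bringing a lift of $X\in\underline{\smash{K}}_\theta$ into $K_\varpi$; this is the only place where the nowhere-zero hypothesis on $\iota_{\mathbbm{1}}\varpi$ is used substantively, and smoothness of $c$ in $x$ requires executing the computation in a local frame rather than purely pointwise.
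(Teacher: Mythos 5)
Your proof is correct and follows essentially the same route as the paper, whose own one-line argument invokes exactly the two ingredients you build on: the contracting homotopy identity $[d_D,\iota_{\mathbbm{1}}]=\id$ and the fact that $\sigma\colon DL\to TM$ is surjective with $\ker\sigma=\langle\mathbbm{1}\rangle$. Your fleshed-out version (existence via $\varpi=d_D(\theta\circ\sigma)$, uniqueness by acyclicity, and the pointwise kernel comparison --- where in the computation of $d_D(\theta\circ\sigma)(\delta,\Delta)$ one should extend $\delta$ as well as $\Delta$ with symbol in $\Gamma(C)$ so both first-order terms vanish) is just the detailed form of that argument.
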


\begin{proof}
	It follows from~\eqref{eq:contracting_homotopy} and the fact that $\sigma:DL\to TM$ is surjective with $\ker\sigma=\langle\mathbbm{1}\rangle$.
\end{proof}

\begin{remark}
	\label{rem:pre-symplectization_trick}
	The line bundle approach to precontact geometry, summarized in Proposition~\ref{prop:Atiyah_presymplectic_forms}, was inspired by the presymplectization construction as we are going to briefly recall.
	Let $\theta$ be an $L$-valued contact (resp.~precontact) form on $M$ and let $\varpi$ be the corresponding $L$-valued symplectic (resp.~presymplectic) Atiyah form.
	In view of Remark~\ref{rem:homogeneous_deRham_complex}, the dg-module isomorphism $(\Omega_{\textnormal{h}}^\bullet(L^\ast),d_{dR})\to(\Omega_L^\bullet,d_D)$ identifies $\varpi$ with a degree one homogeneous closed $2$-form $\tilde{\varpi}$ on $L^\ast$.
	It is easy to see, in adapted local coordinates, that $\tilde{\varpi}$ has maximal (resp.~constant) rank over $\tilde{M}:=L^\ast\setminus{\bf 0}_M$, where ${\bf 0}_M$ denotes the image of the zero section.
	Indeed the (pre)symplectic form $\tilde{\varpi}|_{\tilde{M}}$ is the \emph{(pre)symplectization} of the (pre)contact form $\theta$.
\end{remark}


Recall that a \emph{Jacobi structure} on a line bundle $L\to M$ is a Lie bracket $\{-,-\}$ on $\Gamma(L)$ which is a derivation of $L$ in both entries.
Notice that a skew-symmetric bi-derivation $\{-,-\}:\Gamma(L)\times\Gamma(L)\to\Gamma(L)$ identifies with the section $J\in\Gamma(\wedge^2(J^1L)^\ast\otimes L)$ such that $J(j^1\lambda,j^1\mu)=\{\lambda,\mu\}$, for all $\lambda,\mu\in\Gamma(L)$.
A Jacobi structure $J=\{-,-\}$ on $L\to M$ is called \emph{non-degenerate} if $J^\sharp\colon J^1L\to DL$, $\eta\mapsto J(\eta,-)$, is a vector bundle isomorphism.
For more details about Jacobi structures see, e.g.,~\cite{LOTV,LTV,T2017}.

	\begin{proposition}[{\cite[Proposition~3.6]{vitagliano2015dirac}}]
		\label{prop:non-degenerate_Jacobi_structure}
		For any line bundle $L\to M$, the relation $\varpi^\flat=(J^\sharp)^{-1}$ establishes a one-to-one correspondence between $L$-valued symplectic Atiyah forms $\varpi$ and non-degenerate Jacobi structures $J$ on $L\to M$.
		Moreover if $\varpi$ and $J$ correspond to each other, then there is a Lie algebra isomorphism $\calX:\Gamma(L)\to\frakX_C$, $\lambda\mapsto\calX_\lambda:=\sigma(J^\sharp(j^1\lambda))$, where $C:=\ker\theta$, and $\theta$ is the $L$-valued contact form corresponding to $\varpi$ according to Proposition~\ref{prop:Atiyah_presymplectic_forms}.
	\end{proposition}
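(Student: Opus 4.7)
The plan is to treat the two assertions of the proposition separately, leveraging the identification $\Omega^1_L \simeq \Gamma(J^1 L)$ from Remark~\ref{rem:derivations_of_line bundles} and the contracting homotopy~\eqref{eq:contracting_homotopy}.

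For the first assertion, the identification $(DL)^\ast\otimes L \simeq J^1L$ makes both $\varpi^\flat$ and $J^\sharp$ into vector bundle morphisms between the same pair of bundles $DL$ and $J^1L$, so that non-degeneracy of $\varpi$ (respectively of $J$) is exactly invertibility of $\varpi^\flat$ (respectively of $J^\sharp$). Thus the relation $\varpi^\flat = (J^\sharp)^{-1}$ sets up a purely algebraic bijection between no-where zero skew $\varpi\in\Omega^2_L$ with invertible $\varpi^\flat$ and non-degenerate skew biderivations $J$; the no-where zero condition on $\iota_{\mathbbm{1}}\varpi$ is automatic, since $\mathbbm{1}$ is a no-where zero section of $DL$ and $\varpi^\flat$ is invertible. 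It remains to match the integrability conditions. Introducing the Hamiltonian derivation $X_\lambda := J^\sharp(d_D\lambda)\in\calD L$, which by construction satisfies $\iota_{X_\lambda}\varpi = d_D\lambda$ and $X_\lambda\mu = \{\lambda,\mu\}$, a standard Cartan-calculus computation shows that $d_D\varpi(X_\lambda,X_\mu,X_\nu)$ equals, up to a global sign, the Jacobiator $\{\lambda,\{\mu,\nu\}\} + \text{c.p.}$ Since $1$-jet prolongations locally generate $\Gamma(J^1L)$ as a $C^\infty(M)$-module and $J^\sharp$ is an isomorphism, the Hamiltonian derivations $X_\lambda$ locally span $\calD L$, so $d_D\varpi = 0$ is equivalent to the Jacobi identity for $\{-,-\}$.

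For the second assertion, I would first verify that $\calX_\lambda := \sigma(X_\lambda)$ lies in $\frakX_C$. From $\iota_{X_\lambda}\varpi = d_D\lambda$ and $d_D\varpi = 0$ the Cartan identities give $\calL_{X_\lambda}\varpi = 0$, and since $[\square,\mathbbm{1}] = 0$ for every $\square\in\calD L$ I also obtain $\calL_{X_\lambda}(\iota_{\mathbbm{1}}\varpi) = \calL_{X_\lambda}(\theta\circ\sigma) = 0$ via Proposition~\ref{prop:Atiyah_presymplectic_forms}. Pairing this identity with a lift $\square_Y\in\calD L$ of an arbitrary $Y\in\frakX(M)$ (available since $\sigma$ is surjective) gives $\theta([\calX_\lambda,Y]) = X_\lambda(\theta(Y))$, which vanishes whenever $Y\in\Gamma(C)=\Gamma(\ker\theta)$; hence $\calX_\lambda\in\frakX_C$. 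The Lie-algebra morphism property follows by taking symbols in the identity $[X_\lambda,X_\mu] = X_{\{\lambda,\mu\}}$, itself equivalent to the Jacobi identity already established.

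Bijectivity of $\calX$ reduces to two short facts. First, $\iota_{\mathbbm{1}}\iota_{X_\lambda}\varpi = \mathbbm{1}\lambda = \lambda$, while the same quantity also equals $-\iota_{X_\lambda}(\theta\circ\sigma) = -\theta(\calX_\lambda)$; hence $\theta\circ\calX = -\id_{\Gamma(L)}$, which gives injectivity of $\calX$ and surjectivity of $\theta\colon\frakX_C\to\Gamma(L)$. Second, the contact condition, i.e.\ non-degeneracy of $\omega^\flat$, forces $\theta\colon\frakX_C\to\Gamma(L)$ to be injective: any $Y\in\frakX_C$ with $\theta(Y)=0$ lies in $\Gamma(C)$ and satisfies $\omega(Y,Z)=\theta([Y,Z])=0$ for all $Z\in\Gamma(C)$, so $Y=0$. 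The main obstacle will be the Cartan-calculus computation tying $d_D\varpi$ to the Jacobiator, where the $L$-twisted pairing $DL\otimes J^1L\to L$ must be tracked carefully; everything else is a structured application of the Cartan identities of Section~\ref{subsec:Atiyah_algebroid}.
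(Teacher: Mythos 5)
Your proof of the first assertion is essentially the paper's own argument: both introduce the Hamiltonian derivations $\Delta_\lambda=J^\sharp(j^1\lambda)$ (your $X_\lambda$), compute $(d_D\varpi)(\Delta_\lambda,\Delta_\mu,\Delta_\nu)$ to be the Jacobiator, and conclude that $d_D$-closedness of $\varpi$ is equivalent to the Jacobi identity because these derivations span $DL$ pointwise; your explicit remarks on the spanning property and on the automatic nowhere-vanishing of $\iota_{\mathbbm{1}}\varpi$ only make precise what the paper leaves implicit in ``a straightforward computation''. Where you go beyond the paper is the second assertion: the paper's proof stops after the closedness/Jacobi equivalence and delegates the rest to \cite{vitagliano2015dirac}, whereas you give a direct argument, namely $\calL_{X_\lambda}\varpi=0$ together with the centrality of $\mathbbm{1}$ in $\calD L$ yields $\calL_{X_\lambda}(\iota_{\mathbbm{1}}\varpi)=\calL_{X_\lambda}(\theta\circ\sigma)=0$ and hence $\calX_\lambda\in\frakX_C$; the identity $[X_\lambda,X_\mu]=X_{\{\lambda,\mu\}}$ (a consequence of the Jacobi identity) gives the Lie algebra morphism property after applying $\sigma$; and bijectivity follows from $\theta\circ\calX=-\id_{\Gamma(L)}$ combined with the injectivity of $\theta$ on $\frakX_C$, which you correctly deduce from the non-degeneracy of $\omega^\flat$. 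This supplementary part is sound (the sign in $\theta(\calX_\lambda)=-\lambda$ is a harmless convention artifact), so your proposal amounts to a complete proof of the full statement, following the paper's route where the paper gives one and filling in the part the paper only cites.
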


\begin{proof}
	Fix a skew-symmetric bi-derivation $J=\{-,-\}:\Gamma(L)\times\Gamma(L)\to\Gamma(L)$ and an $L$-valued Atiyah $2$-form $\varpi$ such that $J^\sharp:J^1L\to DL$ and $\omega^\flat:DL\to J^1L$ are inverses of each other.
	Set $\Delta_\lambda:=\{\lambda,-\}=J^\sharp(j^1\lambda)\in\calD L$, for all $\lambda\in\Gamma(L)$.
	Then a straightforward computation shows that
	\begin{equation*}
		(d_D\varpi)(\Delta_\lambda,\Delta_\mu,\Delta_\nu)=\{\lambda,\{\mu,\nu\}\}+\{\mu,\{\nu,\lambda\}\}+\{\nu,\{\lambda,\mu\}\},
	\end{equation*}
	for all $\lambda,\mu,\nu\in\Gamma(L)$.
	Hence $\varpi$ is $d_D$-closed iff $\{-,-\}$ satisfies the Jacobi identity.
\end{proof}

\section{Coisotropic submanifolds of contact manifolds}
\label{sec:coisotropic_submanifolds}

Let $\theta$ be an $L$-valued contact form on $M$, with associated contact distribution $C$ and curvature form $\omega$.
Denote by $\varpi$ the $L$-valued symplectic Atiyah form corresponding to $\theta$ according to Proposition~\ref{prop:Atiyah_presymplectic_forms}.
Fix a submanifold $S\subset M$, and set $C_S:=C\cap TS$.
Assume that the distribution $C_S$ on $S$ has constant rank.
The submanifold $S$ is said to be \emph{coisotropic} in $(M,C)$ if $C_S{}^{\perp_\omega}\subset C_S$, where $\perp_\omega$ denotes the orthogonal complement in $C$ wrt $\omega$.
Any coisotropic submanifold $S$ is endowed with the \emph{characteristic foliation} $\calF$ such that $T\calF=C_S{}^{\perp_\omega}$.
\begin{remark}
	Having assumed that $C_S$ has constant rank, we only consider in the contact setting coisotropic submanifolds whose characteristic distribution is non-singular.
	Additionally, in view of this initial assumption, only two possibilities exist for a coisotropic submanifold $S$:
	\begin{enumerate}
		\item\label{enumitem:Legendrian_case}
		$S$ is tangent to $C$, i.e.~$TS\subset C$, or equivalently $C_S=TS$,
		\item\label{enumitem:regular_case}
		$S$ is transverse to $C$, i.e.~$(TM)|_S=C|_S + TS$, or equivalently $C_S$ is an hyperplane distribution on $S$.
	\end{enumerate}
	The first case actually describes the Legendrian submanifolds $S$ of $(M,C)$, i.e.~the maximally isotropic submanifolds $S$ of $(M,C)$, so that $T\calF=TS$.
	The second case describes exactly those coisotropic submanifolds $S$ of $(M,C)$ that are called \emph{regular} (cf.~\cite[Definition~5.10]{LOTV}).
\end{remark}

The next proposition provides an equivalent characterization of regular coisotropic submanifolds.

\begin{proposition}
	\label{prop:regular_coisotropic_submanifolds}
	Assume that $C_S$ has constant rank.
	Then the following conditions are equivalent:	
	\begin{enumerate}
		\item
		\label{enumitem:regular_coisotropic_submanifolds_1}
		$S$ is a regular coisotropic submanifold of $(M,C)$,
		\item
		\label{enumitem:regular_coisotropic_submanifolds_2}
		$\theta_S:=\theta\circ T\underline{\smash{i}}_S=\theta|_{TS}$ is an $L_S$-valued precontact form, with $\underline{\smash K}_{\theta_S}=C_S{}^{\perp\omega}$,
		\item
		\label{enumitem:regular_coisotropic_submanifolds_3}
		$\varpi_S:=i_S^\ast\varpi$ is a $L_S$-valued presymplectic Atiyah form, with $K_{\varpi_S}=(DL_S)^{\perp\varpi}$, where $\perp_{\varpi}$ denotes the othogonal complement in $DL$ wrt $\varpi$.
	\end{enumerate}
	If equivalent conditions~\eqref{enumitem:regular_coisotropic_submanifolds_1}--\eqref{enumitem:regular_coisotropic_submanifolds_3} hold, then $\theta_S$ and $\varpi_S$ correspond to each other within Proposition~\ref{prop:Atiyah_presymplectic_forms} and make $S$ into a precontact manifold with curvature form $\omega|_{C_S}$ and characteristic distribution $C_S{}^{\perp_\omega}$.
\end{proposition}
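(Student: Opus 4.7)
The plan is to apply Proposition~\ref{prop:Atiyah_presymplectic_forms} to the restricted line bundle $L_S\to S$ with the candidate data $(\theta_S,\varpi_S)$, and then match the kernel conditions via a rank count.

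To get $(2)\Leftrightarrow(3)$, I would first check that $\theta_S$ and $\varpi_S$ are related on $L_S$ just as $\theta$ and $\varpi$ are related on $L$. Closedness $d_D\varpi_S=0$ is immediate from $d_D\circ i_S^\ast=i_S^\ast\circ d_D$. For the contraction identity $\iota_{\mathbbm{1}_{L_S}}\varpi_S=\theta_S\circ\sigma$, observe that $\mathbbm{1}_{L_S}$ is $i_S$-related to $\mathbbm{1}_L$ (both act as the identity), so $\iota_{\mathbbm{1}_{L_S}}\circ i_S^\ast=i_S^\ast\circ\iota_{\mathbbm{1}_L}$; combined with $\iota_{\mathbbm{1}_L}\varpi=\theta\circ\sigma$ from Proposition~\ref{prop:Atiyah_presymplectic_forms} and the symbol-anchor compatibility $\sigma\circ Di_S=T\underline{\smash{i}}_S\circ\sigma$, the identity follows. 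Hence Proposition~\ref{prop:Atiyah_presymplectic_forms} applied to $L_S\to S$ gives precontactness of $\theta_S$ iff presymplecticness of $\varpi_S$, together with a Lie algebroid isomorphism $\sigma\colon K_{\varpi_S}\xrightarrow{\sim}\underline{\smash{K}}_{\theta_S}$.

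To match the kernel conditions, note that $\underline{\smash{K}}_{\theta_S}=C_S\cap C_S^{\perp_\omega}$ and $K_{\varpi_S}=DL_S\cap(DL_S)^{\perp_\varpi}$, so these conditions amount to $C_S^{\perp_\omega}\subset C_S$ and $(DL_S)^{\perp_\varpi}\subset DL_S$ respectively. A rank count using non-degeneracy of $\omega$ on $C$ and of $\varpi$ on $DL$ (and the transversality of $S$ to $C$ forced by nowhere-vanishing of $\theta_S$) shows that both $C_S^{\perp_\omega}$ and $(DL_S)^{\perp_\varpi}$ have rank $\operatorname{codim}(S)$; since $K_{\varpi_S}$ and $\underline{\smash{K}}_{\theta_S}$ have equal rank via the isomorphism above, the two kernel conditions are equivalent. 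For $(1)\Leftrightarrow(2)$: nowhere-vanishing of $\theta_S$ is transversality of $S$ to $C$, and then its curvature is visibly $\omega|_{C_S}$ with kernel $C_S\cap C_S^{\perp_\omega}$, so $(2)$ is the combination of transversality and coisotropicity, i.e.~the regular coisotropic condition of $(1)$. The concluding assertions of the proposition (that $\theta_S$ and $\varpi_S$ correspond, that $S$ inherits a precontact structure with curvature $\omega|_{C_S}$ and characteristic distribution $C_S^{\perp_\omega}$) then just repackage the previous steps against Section~\ref{sec:precontact_geometry}.

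The main obstacle I foresee is the verification of $\iota_{\mathbbm{1}_{L_S}}\varpi_S=\theta_S\circ\sigma$ in the first paragraph: it is a bookkeeping exercise in Atiyah-algebroid functoriality along $i_S\colon L_S\to L$, but it requires correctly tracking the $i_S$-relatedness of $\mathbbm{1}_{L_S}$ and $\mathbbm{1}_L$ together with the anchor-symbol compatibility. Once that identity is in hand, Proposition~\ref{prop:Atiyah_presymplectic_forms} does the heavy lifting and the remainder is routine rank-counting and direct inspection of definitions.
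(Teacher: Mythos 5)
Your proposal is correct and takes essentially the same route as the paper: the equivalence \eqref{enumitem:regular_coisotropic_submanifolds_1}$\Leftrightarrow$\eqref{enumitem:regular_coisotropic_submanifolds_2} is read off from the definitions, and \eqref{enumitem:regular_coisotropic_submanifolds_2}$\Leftrightarrow$\eqref{enumitem:regular_coisotropic_submanifolds_3} is obtained by verifying the defining relation $\theta_S\circ\sigma=\iota_{\mathbbm{1}}\varpi_S$ for the restricted data and invoking Proposition~\ref{prop:Atiyah_presymplectic_forms}. You merely spell out the functoriality bookkeeping and the rank count for the kernel identifications that the paper dismisses as straightforward/immediate.
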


\begin{proof}
	(1)$\Leftrightarrow$(2).
	It is straightforward from the definition of regular coisotropic submanifold.
	
	(2)$\Leftrightarrow$(3).
	$L_S$-valued $1$-form $\theta_S$ and $L_S$-valued Atiyah $2$-form $\varpi_S$ satisfy the relation $\theta_S\circ\sigma=\iota_{\mathbbm{1}}\varpi_S$ because of their very definition.
	Hence, according to Proposition~\ref{prop:Atiyah_presymplectic_forms}, $\theta_S$ is a precontact form iff $\varpi_S$ is a presymplectic Atiyah form.
	The remaining part of the statement follows immediately.
\end{proof}

\begin{remark}
	Every regular coisotropic submanifold inherits a precontact structure.
	The Coisotropic Neighborhood Theorem~\cite{L1998} states that also the converse holds.
	Indeed for any precontact manifold $(S,C_S)$ there exists (unique up to local contactomorphisms) a \emph{contact thickening}, i.e.~an embedding as regular coisotropic submanifold into a contact manifold.
	In particular, a contact thickening is associated with any choice of a distribution $G$ on $S$ complementary to $T\calF$ (see, e.g.,~\cite[Section~5.3]{LOTV}).
\end{remark}

Let $S$ be a coisotropic submanifold of a contact manifold $(M,C)$.
A \emph{smooth coisotropic deformation} of $S$ is a smooth $1$-parameter family of embeddings $\underline{\smash\varphi}_t\colon S\to M$, with $\underline{\smash\varphi}_0=\id_S$, such that $\underline{\smash\varphi}_t(S)$ is coisotropic in $(M,C)$.
Two smooth coisotropic deformations $\underline{\smash\varphi}_t'$ and $\underline{\smash\varphi}_t''$ are identified if $\underline{\smash\varphi}_t'(S)=\underline{\smash\varphi}_t''(S)$.
Notice that any \emph{contact isotopy} of $(M,C)$, i.e.~any smooth $1$-parameter family $\underline{\smash{\psi}}_t$ of contactomorphisms of $(M,C)$, with $\underline{\smash{\psi}}_0=\id_M$, induces the smooth coisotropic deformation of $S$ defined by $\underline{\smash{\varphi}}_t:=\underline{\smash{\psi}}_t|_S$.
A smooth coisotropic deformation is called \emph{trivial} if it simply consists of diffeomorphisms of $S$.
Being interested only in small deformations of $S$, it is possible to assume to work within a tubular neighborhood $\tau:M\to S$ of $S$ in $M$.
In this restricted setting, a section $s:S\to M$ of $\tau$ is an \emph{infinitesimal coisotropic deformation} of $S$ if the image of $\varepsilon s$ is coisotropic up to infinitesimal $\calO(\varepsilon^2)$, where $\varepsilon$ is a formal parameter.

\subsection{An example of obstructed coisotropic submanifold}
\label{subsec:example_1st_part}

In this one and the following Section~\ref{subsec:example_2nd_part} we present what can be seen as the contact analogue of Zambon's example~\cite{zambon2008example}.
In doing this we closely follow the original approach in the symplectic case.

Let $(M,C)$ be the contact manifold, where $M:=\bbT^5\times\bbR^2$, and $C$ is the kernel of the contact $1$-form $\theta:=\sin x_1 dx_2+\cos x_1 dx_3+y_4dx_4+y_5dx_5$, with $(x_1,\ldots,x_5)$ and $(y_4,y_5)$ denoting the standard coordinates on $\bbT^5$ and $\bbR^2$ respectively.
It is easy to see that $S:=\bbT^5\times\{0\}\simeq\bbT^5$ is a regular coisotropic submanifold, and its precontact structure $C_S$ is the kernel of the precontact $1$-form $\theta_S:=\sin x_1 dx_2+\cos x_1 dx_3$.
Denote by $\calF$ the characteristic foliation of $(S,C_S)$.

The global frame $d_\calF x_4:=(dx_4)|_{T\calF}$, $d_\calF x_5:=(dx_5)|_{T\calF}$ identifies $T^\ast\calF\to S$ with the trivial vector bundle $\tau\colon M=\bbT^5\times\bbR^2\to S=\bbT^5$.
Under this identification, $(M,C)$ identifies with $(T^\ast\calF,\ker(\tau^\ast\theta_S+\theta_G))$, i.e.~the contact thickening of $S$ associated with $G=\operatorname{span}\{\partial/\partial x_1,\partial/\partial x_2,\partial/\partial x_3\}$.

The small coisotropic deformations of $S$ in $(M,C)$ are given by the \emph{coisotropic sections} of $\tau:M\to S$, i.e.~those sections whose image is coisotropic wrt $C$.
A straightforward computation shows that a section $s=fd_\calF x_4+gd_\calF x_5$ is coisotropic iff $f,g\in C^\infty(\bbT^5)$ satisfy the following non-linear first-order pde
\begin{equation}
	\label{eq:contact_coisotropicity}
	\frac{\partial f}{\partial x_1}X(g)-\frac{\partial g}{\partial x_1}X(f)=\frac{\partial g}{\partial x_4}-\frac{\partial f}{\partial x_5}+gY(f)-fY(g),
\end{equation}
where $X:=\cos x_1\frac{\partial}{\partial x_2} -\sin x_1\frac{\partial}{\partial x_3}$, and $Y:=\sin x_1\frac{\partial}{\partial x_2} +\cos x_1\frac{\partial}{\partial x_3}$.
\begin{remark}
	Characterization~\eqref{eq:contact_coisotropicity} of coisotropic sections agrees with the results obtained via the BFV-complex attached to $S$ (cf.~\cite[Sec.~6.4]{T2017}).
	The reader is refereed to~\cite{LTV} for more details about the BFV-complex attached to a coisotropic submanifold, in the more general setting of Jacobi manifolds, and its role in the coisotropic deformation problem. 
\end{remark}

Linearizing~\eqref{eq:contact_coisotropicity}, we see that the infinitesimal coisotropic deformations of $S$ in $(M,C)$ are described exactly by those sections $s=fd_\calF x_4+gd_\calF x_5$ such that $f,g\in C^\infty(\bbT^5)$ satisfy the following linear first-order pde
\begin{equation}
	\label{eq:contact_infinitesimal_coisotropicity}
	\frac{\partial g}{\partial x_4}-\frac{\partial f}{\partial x_5}=0.
\end{equation}
Let $s=fd_\calF x_4+gd_\calF x_5$ be an infinitesimal coisotropic deformation.
Assume that $s$ is \emph{prolongable}, i.e.~there is a smooth coisotropic deformation $s_t$ such that $s=\left.\frac{d}{dt}\right|_{t=0}s_t$.
Then, integrating~\eqref{eq:contact_coisotropicity} over $(x_4,x_5)\in\bbT^2$, we get the following necessary condition for the prolongability of $s$:
\begin{equation}
	\label{eq:contact_kuranishi}
	0=\int\limits_0^{2\pi}\int\limits_0^{2\pi}\left[\frac{\partial f}{\partial x_1}X(g)-\frac{\partial g}{\partial x_1}X(f)+fY(g)-gY(f)\right]dx_4 dx_5.
\end{equation}

\begin{remark}
	Characterization~\eqref{eq:contact_infinitesimal_coisotropicity} of infinitesimal coisotropic deformations amounts to the cocycle condition on $s=f d_\calF x_4+g d_\calF x_5$ within the de Rham complex of the Lie algebroid $T\calF\to S$.
	Furthermore obstruction~\eqref{eq:contact_kuranishi} to the prolongability of $s$ is nothing but the vanishing at $[s]$ of the Kuranishi map $\operatorname{Kur}\colon H^1(\calF)\to H^2(\calF)$.
	Hence~\eqref{eq:contact_infinitesimal_coisotropicity} and~\eqref{eq:contact_kuranishi} agree with the results obtained via the $L_\infty$-algebra attached to $S$ (cf.~\cite[Sec.~4.8]{T2017}).
	See also~\cite{LOTV} for details about the $L_\infty$-algebra attached to a coisotropic submanifold, in the more general setting of Jacobi manifolds, and its role in the coisotropic deformation problem.
\end{remark}

Set $s=fd_\calF x_4+gd_\calF x_5$, with $f:=\cos x_2$, and $g:=\sin x_2$.
Clearly $s$ satisfies~\eqref{eq:contact_infinitesimal_coisotropicity}, and so it is an infinitesimal coisotropic deformation of $S$.
However $s$ is not prolongable to a smooth coisotropic deformation because it fails to fulfill~\eqref{eq:contact_kuranishi}.
Indeed, in this case, the rhs of~\eqref{eq:contact_kuranishi} is equal to
\begin{equation*}
	\label{eq:contact_obstructed_infinitesimal_coisotropic_deformation}
	\int\limits_0^{2\pi}\int\limits_0^{2\pi}\left[\frac{\partial f}{\partial x_1}X(g)-\frac{\partial g}{\partial x_1}X(f)+fY(g)-gY(f)\right]dx_4 dx_5=(2\pi)^2\sin x_1\neq 0.
\end{equation*}
The above discussion is summarized by the next proposition.

\begin{proposition}
	\label{prop:obstructed_coisotropic_contact_case}
	The coisotropic deformation problem of $S$ is obstructed, i.e.~there exists an infinitesimal coisotropic deformation of $S$ which cannot be prolonged to a smooth coisotropic deformation of $S$.
\end{proposition}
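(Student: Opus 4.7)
The discussion preceding the statement has already reduced the proposition to a verification: it suffices to exhibit a pair $(f, g) \in C^\infty(\bbT^5) \times C^\infty(\bbT^5)$ satisfying the linear equation~\eqref{eq:contact_infinitesimal_coisotropicity} and violating the integral obstruction~\eqref{eq:contact_kuranishi}. My plan is to pick the simplest candidate depending only on the single variable $x_2$, namely $f := \cos x_2$ and $g := \sin x_2$. The first step is to observe that the derivatives of $f$ and $g$ with respect to $x_4$ and $x_5$ vanish identically, so~\eqref{eq:contact_infinitesimal_coisotropicity} is trivially satisfied, and consequently $s := f\, d_\calF x_4 + g\, d_\calF x_5$ is an honest infinitesimal coisotropic deformation of $S$.

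The second step is to evaluate the integrand of~\eqref{eq:contact_kuranishi} on this $(f,g)$. Since $\partial_{x_1} f = \partial_{x_1} g = 0$, the two terms involving $X$ drop out and only $fY(g) - gY(f)$ survives. A direct computation, using the explicit form of $Y = \sin x_1 \, \partial_{x_2} + \cos x_1 \, \partial_{x_3}$ together with $\cos^2 x_2 + \sin^2 x_2 = 1$, should give $fY(g) - gY(f) = \sin x_1$, independent of $x_3, x_4, x_5$. Integrating over $(x_4, x_5) \in [0, 2\pi]^2$ then yields the nowhere-vanishing $(2\pi)^2 \sin x_1$, as already anticipated in the displayed equation preceding the proposition.

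Since the vanishing of~\eqref{eq:contact_kuranishi} is a \emph{necessary} condition for an infinitesimal coisotropic deformation to be tangent to a smooth coisotropic deformation, the failure of this vanishing for the chosen $s$ shows that $s$ is not prolongable, proving the claim. Honestly there is no real obstacle here: the conceptual work was done upstream, in deriving the nonlinear pde~\eqref{eq:contact_coisotropicity} from the contact thickening picture and then extracting the necessary condition~\eqref{eq:contact_kuranishi} by integration along the fibres of $\tau$. What remains at this stage is simply to recognize that a constant-amplitude Fourier mode in the $x_2$-direction is the minimal ansatz realizing the obstruction, and to record the one-line computation above.
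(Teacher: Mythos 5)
Your proposal is correct and follows exactly the paper's own argument: the paper also takes $f=\cos x_2$, $g=\sin x_2$, notes that~\eqref{eq:contact_infinitesimal_coisotropicity} holds trivially, and computes that the obstruction integral~\eqref{eq:contact_kuranishi} equals $(2\pi)^2\sin x_1\neq 0$, so the infinitesimal deformation is not prolongable. Nothing is missing; the verification you sketch (the $X$-terms vanish and $fY(g)-gY(f)=\sin x_1$) is precisely the computation recorded in the paper.
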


\section{Integral coisotropic submanifolds of a contact manifold}
\label{sec:integral_coisotropic_submanifolds}

A coisotropic submanifold $S$ of a contact manifold $(M,C)$ is said to be \emph{integral} if the characteristic foliation $\calF$ of $S$ is simple, i.e.~there is a (unique) structure of smooth manifold on the characteristic leaf space $S/\calF$ such that the quotient map $S\to S/\calF$ is a surjective submersion.
In other words there is a surjective submersion $\underline{\smash{\pi}}:S\to B$, with connected fibers, such that $T\calF=\ker T\underline{\smash{\pi}}$.
This notion of being integral applies verbatim to precontact manifolds as well.

\begin{remark}
	\label{rem:integral_coisotropic_submanifolds}
	All Legendrian submanifolds are integral.
	Indeed a connected Legendrian submanifold $S$ has only one characteristic leaf: $S$ itself.
	On the contrary for a regular coisotropic submanifold the condition of being integral is generically non-trivial and it depends only on its inherited precontact structure.
	Actually a regular coisotropic submanifold $S$ of $(M,C)$ is integral iff  $S$ is an integral precontact manifold when equipped with the precontact structure inherited from $(M,C)$.
	The following Proposition~\ref{prop:integral_precontact_manifold} leads to an equivalent characterization of integral regular coisotropic submanifolds.
\end{remark}

Let $\theta$ be an $L$-valued contact form on $M$, with associated contact distribution $C$ and curvature form $\omega$.
Denote by $\varpi$ the $L$-valued symplectic Atiyah form corresponding to $\theta$ within Proposition~\ref{prop:Atiyah_presymplectic_forms}.

\begin{proposition}[Contact Reduction]
	\label{prop:integral_precontact_manifold}
	Let $S$ be a regular coisotropic submanifold of $(M,C)$.
	For any surjective submersion $\underline{\smash\pi}\colon S\to B$, with connected fibers, the following conditions are equivalent:
	\begin{enumerate}
		\item\label{enumitem:integral_precontact_manifold_1}
		$S$ is integral with $T\calF=\ker T\underline{\smash{\pi}}$,
		\item\label{enumitem:integral_precontact_manifold_2}
		there is a (unique) contact distribution $C_B$ on $B$ such that $C_S=(T\underline{{\smash\pi}})^{-1}C_B$,
		\item\label{enumitem:integral_precontact_manifold_3}
		there is a regular line bundle morphism $\pi:L_S\to L_B$, covering $\underline{\smash{\pi}}:S\to B$, and a (unique) $L_B$-valued contact form $\theta_B$ such that $\pi\circ\theta_S=\theta_B\circ T\underline{\smash\pi}$,
		\item\label{enumitem:integral_precontact_manifold_4}
		there is a regular line bundle morphism $\pi:L_S\to L_B$, covering $\underline{\smash{\pi}}:S\to B$, and a (unique) $L_B$-valued symplectic Atiyah form $\varpi_B$ such that $\varpi_S=\pi^\ast\varpi_B$.
	\end{enumerate}
	If the equivalent conditions~\eqref{enumitem:integral_precontact_manifold_1}--\eqref{enumitem:integral_precontact_manifold_4} hold, then $B$ is called the \emph{reduced contact manifold} of $S$, with \emph{contact reduction} performed via $\underline{\smash\pi}$ (or $\pi$).
\end{proposition}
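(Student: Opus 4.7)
The plan is to establish three pairwise equivalences: $(3)\Leftrightarrow(4)$ via Proposition~\ref{prop:Atiyah_presymplectic_forms} transported along $\pi$, $(2)\Leftrightarrow(3)$ by a quotient construction, and $(1)\Leftrightarrow(4)$ via Lemma~\ref{lem:basic_Atiyah_forms}.

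The pair $(2)\Leftrightarrow(3)$ is routine: set $L_B:=TB/C_B$ with $\theta_B$ the canonical projection, and take $\pi:L_S\to L_B$ to be the morphism induced by $T\underline{\smash\pi}$ upon passing to the quotients $L_S=TS/C_S$ and $L_B=TB/C_B$, which is regular because $\underline{\smash\pi}$ is a submersion; conversely $C_B:=\ker\theta_B$ recovers (2) from (3). For $(3)\Leftrightarrow(4)$, the direct check $(D\pi)\mathbbm{1}_{L_S}=\mathbbm{1}_{L_B}$ shows that $\mathbbm{1}_{L_S}$ and $\mathbbm{1}_{L_B}$ are $\pi$-related, hence $\iota_{\mathbbm{1}_{L_S}}\circ\pi^\ast=\pi^\ast\circ\iota_{\mathbbm{1}_{L_B}}$. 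Combined with the defining relation $\theta\circ\sigma=\iota_{\mathbbm{1}}\varpi$ of Proposition~\ref{prop:Atiyah_presymplectic_forms}, applied on $S$ and on $B$, this makes $\pi\circ\theta_S=\theta_B\circ T\underline{\smash\pi}$ equivalent to $\iota_{\mathbbm{1}_{L_S}}\varpi_S=\iota_{\mathbbm{1}_{L_S}}\pi^\ast\varpi_B$; since both sides are $d_D$-closed, the contracting homotopy~\eqref{eq:contracting_homotopy} upgrades this to $\varpi_S=\pi^\ast\varpi_B$.

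The easier half of $(1)\Leftrightarrow(4)$ is $(4)\Rightarrow(1)$: one shows $K_{\varpi_S}=\ker D\pi$ by combining the identity $\iota_\square\pi^\ast\varpi_B=\pi^\ast\iota_{(D\pi)\square}\varpi_B$ with fiberwise surjectivity of $D\pi$ (which makes $\pi^\ast$ injective on Atiyah forms) and the non-degeneracy of $\varpi_B$; taking symbols and invoking Proposition~\ref{prop:Atiyah_presymplectic_forms} yields $T\calF=\sigma(K_{\varpi_S})=\sigma(\ker D\pi)=\ker T\underline{\smash\pi}$, which is (1).

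The main obstacle is the converse $(1)\Rightarrow(4)$, where $L_B$, $\pi$, and $\varpi_B$ must all be constructed from the foliation data. By Proposition~\ref{prop:Atiyah_presymplectic_forms}, $K_{\varpi_S}\subset DL_S$ is a Lie subalgebroid whose anchor is an isomorphism onto $T\calF=\ker T\underline{\smash\pi}$, i.e.\ a flat partial connection on $L_S$ along the (connected) fibers of $\underline{\smash\pi}$; standard descent theory for flat line bundles along a simple fibration produces a line bundle $L_B\to B$ together with a regular morphism $\pi:L_S\to L_B$ such that $\ker D\pi=K_{\varpi_S}$. Then Lemma~\ref{lem:basic_Atiyah_forms} applies to $\varpi_S$: $\iota_\square\varpi_S=0$ for $\square\in\Gamma(K_{\varpi_S})$ holds by definition of $K_{\varpi_S}$, and $\calL_\square\varpi_S=\iota_\square d_D\varpi_S+d_D\iota_\square\varpi_S=0$ follows from $d_D\varpi_S=0$. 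Hence $\varpi_S=\pi^\ast\varpi_B$ for a unique $\varpi_B\in\Omega^2_{L_B}$; closedness of $\varpi_B$ follows from injectivity of $\pi^\ast$, and non-degeneracy is obtained by reversing the kernel argument of $(4)\Rightarrow(1)$, completing the cycle.
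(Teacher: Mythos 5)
Your equivalences (2)$\Leftrightarrow$(3) and (3)$\Leftrightarrow$(4) are correct and essentially the paper's own arguments (quotient construction plus the contracting homotopy~\eqref{eq:contracting_homotopy}), and your kernel computation for (4)$\Rightarrow$(1) is fine. The genuine gap is in (1)$\Rightarrow$(4), at the sentence invoking ``standard descent theory for flat line bundles along a simple fibration''. A line bundle $L_S\to S$ with a flat partial connection along the fibres of a surjective submersion $\underline{\smash\pi}\colon S\to B$ does \emph{not} descend in general: the existence of a regular morphism $\pi\colon L_S\to L_B$ with $\ker D\pi$ equal to the prescribed flat lift of $\ker T\underline{\smash\pi}$ forces the holonomy of that connection to be trivial on every fibre, and this can fail. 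For instance, take $\underline{\smash\pi}\colon\bbT^2\to\bbS^1$ and the trivial line bundle with fibrewise connection $\nabla_{\partial_\phi}=\partial_\phi+c$, $c\neq 0$: there are no nonzero fibrewise-parallel sections and no such $(L_B,\pi)$ exists. So the construction of $(L_B,\pi)$ — which is the heart of the implication — is not justified; you must prove that the holonomy of $K_{\varpi_S}$ along each leaf of $\calF$ vanishes.

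That missing point is precisely what the paper's step (1)$\Rightarrow$(2) provides, and it can be grafted onto your scheme. Since $d_D\varpi_S=0$ and $\iota_{\mathbbm{1}}\varpi_S=\theta_S\circ\sigma$, one has $\varpi_S=d_D(\theta_S\circ\sigma)$, and evaluating on $\square\in\Gamma(K_{\varpi_S})$ with symbol $X\in\Gamma(T\calF)$ and on any $\Delta$ with symbol $Y$ gives $\square(\theta_S(Y))=\theta_S([X,Y])$; thus under $\theta_S\colon TS/C_S\simeq L_S$ your flat partial connection is the Bott-type connection on $TS/C_S$. Moreover $\theta_S([X,Y])=\omega_S(X,Y)=0$ for $X\in\Gamma(T\calF)$, $Y\in\Gamma(C_S)$, so $C_S$ is invariant along $\calF$ and (connected fibres) projects to a hyperplane distribution $C_B=(T\underline{\smash\pi})C_S$ on $B$ with $C_S=(T\underline{\smash\pi})^{-1}C_B$; hence $L_S\simeq\underline{\smash\pi}^\ast(TB/C_B)$ compatibly with the connection, the holonomy is trivial, and $L_B:=(TB)/C_B$ with the induced $\pi$ does the job, after which your use of Lemma~\ref{lem:basic_Atiyah_forms} and the non-degeneracy check go through. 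Note that this repair is, in substance, the paper's proof of (1)$\Leftrightarrow$(2): the paper builds $C_B$ and $L_B=(TB)/C_B$ directly on $B$ and never needs to descend $L_S$ along a flat connection.
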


\begin{proof}
	(1)$\Leftrightarrow$(2).
	If $\ker T\underline{\smash{\pi}}\subset T\calF$, then $C_B:=(T\underline{\smash{\pi}})C_S$ is an hyperplane distribution on $B$, with $C_S=(T\underline{\smash{\pi}})^{-1}C_B$.
	Set $L_B:=(TB)/C_B$ and denote by $\theta_B$ the quotient bundle map $TB\to L_B$.
	Then the curvature form $\omega_B$ associated with $\theta_B$ is completely determined by
	\begin{equation}
		\label{eq:relation_curvature_forms}
		\omega_B((T\underline{\smash{\pi}})\xi_1,(T\underline{\smash{\pi}})\xi_2)=\pi(\omega_S(\xi_1,\xi_2)),
	\end{equation}
	for all $x\in S$ and $\xi_1,\xi_2\in (C_S)_x$, where $\pi:L_S\to L_B$ is the unique regular line bundle morphism such that $\pi\circ\theta_S=\theta_B\circ T\underline{\smash\pi}$.
	From~\eqref{eq:relation_curvature_forms} it follows that $T\calF\subset\ker T\underline{\smash{\pi}}$ iff $\ker\omega_B^\flat=0$, i.e.~$C_B$ is a contact distribution.
	
	(2)$\Leftrightarrow$(3).
	We concentrate only on (2)$\Rightarrow$(3) because the converse is obvious.
	Set $L_B:=(TB)/C_B$ and denote by $\theta_B$ the $L_B$-valued contact form given by the quotient bundle map $TB\to (TB)/C_B$.
	Since $C_S=(T\underline{\smash{\pi}})^{-1}C_B$, there is a unique regular line bundle morphism $\pi:L_S\to L_B$ such that $\pi\circ\theta_S=\theta_B\circ T\underline{\smash\pi}$.
	
	(3)$\Leftrightarrow$(4).
	Let $\pi:L_S\to L_B$ be a regular line bundle morphism covering $\underline{\smash{\pi}}:S\to B$.
	Fix an $L_B$-valued contact form $\theta_B$ and an $L_B$-valued symplectic Atiyah form $\varpi_B$.
	Assume that $\theta_B$ and $\varpi_B$ correspond to each other within Proposition~\ref{prop:Atiyah_presymplectic_forms}, i.e.~$\theta_B\circ\sigma=\iota_{\mathbbm{1}}\varpi_B$.
	Now, using~\eqref{eq:contracting_homotopy}, it is just an easy computation to check that $\pi\circ\theta_S=\theta_B\circ T\underline{\smash{\pi}}$ iff $\varpi_S=\pi^\ast\varpi_B$.
\end{proof}

Let $S$ be an integral coisotropic submanifold of a contact manifold $(M,C)$, with characteristic foliation $\calF$.
A smooth coisotropic deformation $\underline{\smash\varphi}_t$ of $S$ is called \emph{integral} if the coisotropic submanifold $S_t:=\underline{\smash\varphi}_t(S)$ is integral, with characteristic foliation $\calF_t$, and additionally $\underline{\smash\varphi}_t$ induces a diffeomorphism from the characteristic leaf space $S/\calF$ of $S$ to the characteristic leaf space $S_t/\calF_t$ of $S_t$.
Hence, in particular, for a smooth integral coisotropic deformation $\underline{\smash\varphi}_t$, all the characteristic leaf spaces $S_t/\calF_t$ are diffeomorphic to each other.
Notice that, if a smooth coisotropic deformation $\underline{\smash{\varphi}}_t$ of $S$ is induced by a contact isotopy $\underline{\smash{\psi}}_t$, then $\underline{\smash{\varphi}}_t$ is integral.
On the contrary, in general, a smooth coisotropic deformation of an integral regular coisotropic submanifold can fail to be integral as shown by an explicit example in Section~\ref{subsec:example_2nd_part} (see also Proposition~\ref{prop:contact_integral_coisotropic_not_invariant}).
As already mentioned, being interested only in small deformations of $S$, we can assume to work within a tubular neighborhood $\tau:M\to S$ of $S$ in $M$.
In such setting, a section $s:S\to M$ of $\tau$ is an \emph{infinitesimal integral coisotropic deformation} of $S$ if the image of $\varepsilon s$ is integral coisotropic up to infinitesimal $\calO(\varepsilon^2)$, where $\varepsilon$ is a formal parameter.

\subsection{An example of integral coisotropic submanifold unstable under small coisotropic deformations}
\label{subsec:example_2nd_part}

The precontact manifold $(S,C_S)$ of Section~\ref{subsec:example_1st_part} is integral, with reduced contact manifold $(B,C_B)$ and contact reduction performed via $\underline{\smash\pi}\colon S\to B$, where $\underline{\smash \pi}$ is the projection of $S=\bbT^3\times\bbT^2$ on $B=\bbT^3$ and $C_B=\operatorname{span}\{\frac{\partial}{\partial x_1},Y\}$ is the kernel of the contact $1$-form $\theta_B:=\sin x_1 dx_2+\cos x_1 dx_3$.
Hence in particular the characteristic foliation $\calF$ of $(S,C_S)$ is the fibration in $2$-tori provided by $\underline{\smash \pi}$.

Let $s=f d_\calF x_4+g d_\calF x_5$ be an arbitrary coisotropic section.
The image $S'$ of the coisotropic section $s$ and its inherited precontact $1$-form $\theta|_{TS'}$ identify with $S$ and the precontact $1$-form $s^\ast\theta=\cos x_1 dx_2+\sin x_1 dx_3+fdx_4+gdx_5$ respectively.
The characteristic foliation $\calF'$ of $(S,\ker(s^\ast\theta))$ is the integral foliation of the rank $2$ distribution given by
\begin{equation}
	\label{eq:contact_characteristic_distribution}
	T\calF'=\operatorname{span}\left\{X(f)\frac{\partial}{\partial x_1}-\frac{\partial f}{\partial x_1}X+\frac{\partial}{\partial x_4}-fY,\ X(g)\frac{\partial}{\partial x_1}-\frac{\partial g}{\partial x_1}X+\frac{\partial}{\partial x_5}-gY\right\}.
\end{equation}
As a consequence, each leaf $\calL$ of $\calF'$ is bi-dimensional and transverse to the fibers of the projection $p\colon \bbT^5\to\bbT^2,\ (x_1,\ldots,x_5)\mapsto(x_4,x_5)$, so that, by a theorem of Ehresmann~\cite[Ch.~V, Sec.~2]{ehresmann}, $p|_{\calL}\colon\calL\to\bbT^2$ is a covering map.
Hence an arbitrary leaf of $\calF'$ can only be diffeomorphic to $\bbR^2$, $\bbR\times\bbT^1$ or $\bbT^2$.

Consider the smooth $1$-parameter family of coisotropic sections $s_t:=t\sin x_1 d_\calF x_4$, with $t\in\bbR$.
According to~\eqref{eq:contact_characteristic_distribution}, the characteristic foliation $\calF'_t$ of $(S,\ker(s_t^\ast\theta))$ is determined by
\begin{equation}
	\label{eq:contact_characteristic_distribution_bis}
	T\calF'_t=\operatorname{span}\left\{\frac{\partial}{\partial x_4}-t\frac{\partial}{\partial x_2},\ \frac{\partial}{\partial x_5}\right\}.
\end{equation}
Fix arbitrarily $t\in\bbR$, a leaf $\calL$ of $\calF'_t$, and a curve $\gamma(u)$ in $\calL$, with $\gamma(0):=\overline{x}$.
In view of~\eqref{eq:contact_characteristic_distribution_bis}, there exist $a,b\in C^\infty(\bbR)$ uniquely determined by
\begin{equation*}
	\dot\gamma(u)=a(u)\left.\left(\frac{\partial}{\partial x_4}-t\frac{\partial}{\partial x_2}\right)\right|_{\gamma(u)}+b(u)\left.\frac{\partial}{\partial x_5}\right|_{\gamma(u)}.
\end{equation*}
Consequently the curve $\gamma$ is closed iff there exists $u_0>0$ such that
\begin{equation*}
	\int\limits_{0}^{u_0}a(u)du\in 2\pi\bbZ,\quad
	t\int\limits_{0}^{u_0}a(u)du\in 2\pi\bbZ,\quad
	\int\limits_{0}^{u_0}b(u)du\in 2\pi\bbZ.
\end{equation*}
Since $p|_{\calL}\colon\calL\to\bbT^2$ is a covering map, it follows that  $(p|_{\calL})_\ast\colon \pi_1(\calL,\overline{x})\to\pi_1(\bbT^2,p(\overline{x}))$ is a group monomorphism.
In view of the latter, if $\calL$ is diffeomorphic to $\bbT^2$, then there is a closed curve $\gamma$ in $\calL$, with $\gamma(0)=\overline{x}$, such that $(2\pi)^{-1}\int_{0}^{u_0}a(u)du\in\bbZ\setminus{0}$, and so $t$ has to be rational.
Conversely, if $t\notin\bbQ$, then all the leaves of $\calF'_t$ are diffeomorphic to $\bbR\times\bbS^1$, so non-compact, and the precontact manifold $(S,\ker(s_t^\ast\theta)$ is not integral.

The above discussion shows that there exist coisotropic submanifolds of $(M,C)$, arbitrarily close to $S$, which are not integral.
This leads to the following proposition.

\begin{proposition}
	\label{prop:contact_integral_coisotropic_not_invariant}
	Integral coisotropic submanifolds are not stable under small coisotropic deformations.
\end{proposition}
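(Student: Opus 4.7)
The plan is to take the extended construction of Section~\ref{subsec:example_2nd_part} and upgrade it into a proof by exhibiting, on the contact manifold $(M,C)=(\bbT^5\times\bbR^2,\ker\theta)$, a smooth $1$-parameter family of coisotropic sections $s_t$ of the tubular neighborhood $\tau\colon M\to S=\bbT^5\times\{0\}$ with $s_0\equiv 0$, such that each $S_t:=\operatorname{im}(s_t)$ for $t\notin\bbQ$ is a coisotropic but non-integral submanifold arbitrarily $C^\infty$-close to the integral coisotropic $S$.

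First I would record that $S$ is integral: with respect to the inherited precontact $1$-form $\theta_S$, its characteristic foliation is the trivial fibration $\underline{\smash\pi}\colon\bbT^3\times\bbT^2\to\bbT^3$ onto the first three coordinates, and the reduced contact manifold is $(\bbT^3,\ker\theta_B)$. Next I would consider the one-parameter family $s_t := t\sin x_1\, d_\calF x_4$ and verify directly from~\eqref{eq:contact_coisotropicity} (with $f=t\sin x_1$ and $g\equiv 0$, whose dependence solely on $x_1$ makes both sides vanish identically) that $s_t$ is a coisotropic section for every $t\in\bbR$.

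The main step, and the key obstacle, is to show that for irrational $t$ the characteristic foliation $\calF'_t$ of $(S,\ker(s_t^\ast\theta))$, explicitly given by~\eqref{eq:contact_characteristic_distribution} as
\begin{equation*}
T\calF'_t\;=\;\operatorname{span}\{\partial_{x_4}-t\,\partial_{x_2},\ \partial_{x_5}\},
\end{equation*}
is not simple. I would proceed as follows: since every leaf $\calL$ of $\calF'_t$ is transverse to the fibers of $p\colon\bbT^5\to\bbT^2,\ (x_1,\ldots,x_5)\mapsto(x_4,x_5)$, Ehresmann's theorem gives that $p|_\calL\colon\calL\to\bbT^2$ is a covering map, hence an injection $(p|_\calL)_\ast\colon\pi_1(\calL)\hookrightarrow\bbZ^2$. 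Integrating the two spanning vector fields along a closed loop $\gamma\subset\calL$ yields the closedness conditions $\int a\,du\in 2\pi\bbZ$, $t\int a\,du\in 2\pi\bbZ$ and $\int b\,du\in 2\pi\bbZ$; for $t\notin\bbQ$ the first two force $\int a\,du=0$, so $(p|_\calL)_\ast\pi_1(\calL)\subset\{0\}\times 2\pi\bbZ$ has rank at most one. It follows that $\calL$ is diffeomorphic to $\bbR\times\bbS^1$ or $\bbR^2$, hence non-compact; in fact $\calF'_t$ restricts to a Kronecker-type foliation on each $\bbT^2_{(x_2,x_4)}$-slice, whose leaf space is non-Hausdorff.

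By Remark~\ref{rem:integral_coisotropic_submanifolds} this implies $S_t$ is not integral. Since irrational numbers are dense in every neighborhood of $0$, the family $\{S_t\}$ provides non-integral coisotropic deformations of $S$ arbitrarily close to it, completing the proof.
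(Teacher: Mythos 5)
Your proposal is correct and takes essentially the same route as the paper: the same family $s_t=t\sin x_1\,d_\calF x_4$, the same verification via~\eqref{eq:contact_coisotropicity}, and the same Ehresmann covering-map argument showing that for $t\notin\bbQ$ every leaf of $\calF'_t$ is non-compact, so the characteristic foliation is not simple and the deformed submanifold is not integral. Your explicit check of the coisotropicity PDE and the Kronecker-type, non-Hausdorff leaf-space remark merely spell out steps the paper leaves implicit.
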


\subsection{Rigidity of compact integral coisotropic submanifolds}
\label{subsec:rigidity}

Let $L\to M$ be a line bundle, and let $\theta$ be an $L$-valued contact form, with associated contact distribution $C$.
Denote by $\varpi$ the corresponding $L$-valued symplectic Atiyah form, and by $J=\{-,-\}$ the corresponding non-degenerate Jacobi structure on $L\to M$.

Let $S$ be a integral regular coisotropic submanifold of $(M,C)$ and let $\underline{\smash\varphi}_t$ be a smooth integral coisotropic deformation of $S$.
Set $S_t:=\underline{\smash\varphi}_t(S)$ and $L_t:=L_{S_t}$.
Then $S_t$ inherits the $L_t$-valued precontact form $\theta_t:=\theta|_{TS_t}$.
Denote by $\varpi_t$ the $L_t$-valued presymplectic Atiyah form corresponding to $\theta_t$, and by $K_t$ the Lie subalgebroid $\ker(\varpi_t^\flat)\subset DL_t$.
The infinitesimal generator of $\underline{\smash\varphi}_t$ is the smooth $1$-parameter family $\dot{\underline{\smash\varphi}}_t\in\Gamma(TM|_{S_t})$ determined by $\frac{d}{dt}(\underline{\smash\varphi}_t^\ast f)=\underline{\smash\varphi}_t^\ast(\dot{\underline{\smash\varphi}}_t f)$, for all $f\in C^\infty(M)$.

In view of Proposition~\ref{prop:integral_precontact_manifold} we can fix a line bundle $L_B\to B$ and a regular line bundle morphism $\pi\colon L_S=L_0\to L_B$, covering a surjective submersion $\underline{\pi}\colon S=S_0\to B$ with connected fibers, such that the contact reduction of $S$ is performed via $\pi$, and so $K_0=\ker(D\pi)$.

Assume that $S$ is compact.
Being only interested in small deformations of $S$, it is possible to further assume from now on that, up to restrict to small enough $t$ if necessary,
\begin{enumerate}[label=\arabic*)]
	\item
	\label{enumitem:rigidity_assumption_1}
	$\tau\colon M\to S$ is a tubular neighborhood of $S$ in $M$, with $L=\tau^\ast L_0$, and $\underline{\smash\varphi}_t$ is a section of $\tau$,
	\item
	\label{enumitem:rigidity_assumption_2}
	there is a smooth $1$-parameter family of line bundle isomorphisms $\varphi_t\colon L_0\to L_t$ covering $\underline{\smash\varphi}_t\colon S\to S_t$, with $\varphi_0=\id_{L_0}$ such that $(D\varphi_t)K_0=K_t$, 
	i.e.~the contact reduction of $S_t$ is performed via $\pi\circ\varphi_t^{-1}$.
\end{enumerate}

\begin{remark}
	\label{rem:further_assumptions}
	Assumptions~\ref{enumitem:rigidity_assumption_1} and~\ref{enumitem:rigidity_assumption_2} are made possible not only because $S$ is compact but also because its smooth coisotropic deformation $\underline{\smash\varphi}_t$ is integral.
	Specifically, the existence of a (non-unique) $\varphi_t$ as in \ref{enumitem:rigidity_assumption_2} depends on both $S$ being compact and the fact that $\underline{\smash\varphi}_t:S\to S_t$ induces a diffeomorphism of the corresponding characteristic leaf spaces.
	This is precisely the reason  why the current argument holds for smooth coisotropic deformations which are  integral but fails for arbitrary (non-necessarily integral) ones.
\end{remark}

The infinitesimal generator of $\varphi_t$ is the smooth $1$-parameter family $\dot{\varphi}_t\in\Gamma((DL)|_{S_t})$, with $\sigma(\dot{\varphi}_t)=\dot{\underline{\smash\varphi}}_t$, determined by $\frac{d}{dt}(\varphi_t^\ast \lambda)=\varphi_t^\ast(\dot{\varphi}_t\lambda)$, for all $\lambda\in\Gamma(L)$.
Up to quotienting those integral coisotropic deformations which are trivial, $\dot\varphi_t$ comes faithfully encoded into the smooth $1$-parameter family of $L_0$-valued Atiyah $1$-forms $\beta_t:=\varphi_t^\ast(\iota(\dot\varphi_t)\varpi)$.
It is easy to see that $d_D\beta_t=\frac{d}{dt}\varphi_t^\ast \varpi$.
Hence, because of the above assumption~\ref{enumitem:rigidity_assumption_2}, 
\begin{equation}
	\label{eq:infinitesimal_integral_coisotropic_deformations}
	\iota(\square)d_D\beta_t=0,\qquad\calL_\square d_D\beta_t=0,
\end{equation}
for all $\square\in\Gamma(K_0)=\Gamma(\ker(D\pi))\subset\calD L_0$.
As a consequence, in view of Equation~\eqref{eq:contracting_homotopy} and Lemma~\ref{lem:basic_Atiyah_forms}, there is a smooth path $\lambda_t$ in $\Gamma(L_0)$ such that  $\beta_t-d_D\lambda_t\in\pi^\ast(\Omega_{L_B}^1)$, and so, a fortiori, $\beta_t-d_D\lambda_t\in\Omega^1_{L_0}$ annihilates $K_0\subset DL_0$.
The above assumption~\ref{enumitem:rigidity_assumption_1} guarantees that there is a smooth path $\tilde\lambda_t$ in $\Gamma(L)$ such that $\lambda_t=\varphi_t^\ast\tilde\lambda_t$, and so also $\iota(\dot\varphi_t)\varpi-d_D(\tilde\lambda_t|_{S_t})\in\Omega^1_{L_t}$ annihilates $K_t\subset DL_t$.
Since $K_t^{\perp\varpi}=DL_{t}$ (cf.~Proposition~\ref{prop:regular_coisotropic_submanifolds}), the latter can be equivalently rewritten as
\begin{equation}
	\label{eq:rigidity_final}
	\dot\varphi_t\equiv (\varpi^\flat)^{-1}(d_D\tilde\lambda_t)|_{S_t}\mod\calD L_{t}.
\end{equation}
Recall that $\Delta_{\tilde\lambda_t}:=(\varpi^\flat)^{-1}(d_D\tilde\lambda_t)=J^\sharp(j^1\tilde\lambda_t)=\{\tilde\lambda_t,-\}$ is a time-dependent Hamiltonian derivation of the Jacobi bundle $(L,J)$ (cf., e.g.,~\cite{LOTV,LTV,T2017}).
Hence it generates a smooth $1$-parameter family of local automorphisms $\psi_t$ of the Jacobi bundle $(L,J)$ which covers the contact isotopy $\underline{\smash\psi}_t$ of $(M,C)$ generated by the time dependent contact vector field $\calX_{\tilde\lambda_t}=\sigma(\Delta_{\tilde\lambda_t})$. 
Finally, from what above and~\eqref{eq:rigidity_final}, it follows that $S_t:=\underline{\smash\varphi}_t(S)$ coincides with $\underline{\smash\psi}_t(S)$.

The above discussion shows that locally every smooth integral coisotropic deformation of $S$ is induced by a contact isotopy of $(M,C)$.
Since the remaining Legendrian case is already well-known, this leads to the following.

\begin{theorem}
	\label{prop:rigidity_compact_integral_coisotropic}
	Compact integral coisotropic submanifolds are rigid, i.e.~locally their smooth integral co\-iso\-tro\-pic deformations are induced by contact isotopies.
\end{theorem}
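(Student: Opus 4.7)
The plan is to reduce the theorem to the construction of a smooth family of Hamiltonian sections $\tilde{\lambda}_t \in \Gamma(L)$ whose flow in the Jacobi bundle $(L,J)$ sweeps out the family $S_t := \underline{\smash\varphi}_t(S)$. The Legendrian case is classical (the characteristic distribution is $TS$ and the standard Weinstein-style argument applies), so the substantive content is the regular case, and I focus on that. The first step is to set up a good tubular neighborhood $\tau\colon M \to S$ in which $L = \tau^\ast L_0$ and $\underline{\smash\varphi}_t$ is a section of $\tau$, and then to promote $\underline{\smash\varphi}_t\colon S \to S_t$ to a smooth family of regular line bundle isomorphisms $\varphi_t\colon L_0 \to L_t$ with $\varphi_0 = \id_{L_0}$. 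The crucial refinement is to choose $\varphi_t$ so that $(D\varphi_t) K_0 = K_t$, where $K_t := \ker \varpi_t^\flat$; by Proposition~\ref{prop:integral_precontact_manifold}, this amounts to saying that the contact reduction of $S_t$ is performed via $\pi \circ \varphi_t^{-1}$. Existence of such a $\varphi_t$ is exactly where both compactness of $S$ and integrality of the deformation enter: integrality gives a smooth isomorphism of leaf spaces $B \cong S_t/\calF_t$, and compactness lets us patch local lifts globally.

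Once $\varphi_t$ is in hand, I would differentiate and study the infinitesimal generator $\dot\varphi_t \in \Gamma((DL)|_{S_t})$, packaging it into the $L_0$-valued Atiyah $1$-form $\beta_t := \varphi_t^\ast(\iota_{\dot\varphi_t}\varpi)$ on $S$. A short calculation using the Lie derivative formula~\eqref{eq:Lie_derivative_formula} shows $d_D \beta_t = \frac{d}{dt}\varphi_t^\ast \varpi$, and the condition $(D\varphi_t)K_0 = K_t$ translates into $\iota_\square d_D \beta_t = 0 = \calL_\square d_D \beta_t$ for all $\square \in \Gamma(K_0) = \Gamma(\ker D\pi)$. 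By Lemma~\ref{lem:basic_Atiyah_forms}, $d_D \beta_t$ is therefore basic with respect to $\pi$.

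The next step is the algebraic heart of the argument: since the der-complex is acyclic with contracting homotopy $\iota_{\mathbbm{1}}$ by~\eqref{eq:contracting_homotopy}, and since both the pullback $\pi^\ast$ and the Atiyah de Rham differential commute with $\iota_{\mathbbm{1}}$, I can find a smooth path $\lambda_t \in \Gamma(L_0)$ such that $\beta_t - d_D \lambda_t$ annihilates $K_0$. Pulling $\lambda_t$ back along $\tau$ and $\varphi_t$ to a family $\tilde\lambda_t \in \Gamma(L)$ and using $K_t^{\perp_\varpi} = DL_t$ from Proposition~\ref{prop:regular_coisotropic_submanifolds}, the residual form is killed modulo $\calD L_t$, giving the key congruence $\dot\varphi_t \equiv (\varpi^\flat)^{-1}(d_D \tilde\lambda_t)|_{S_t} \mod \calD L_t$.

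Finally, I would interpret the right-hand side via Proposition~\ref{prop:non-degenerate_Jacobi_structure}: the derivation $\Delta_{\tilde\lambda_t} = J^\sharp(j^1 \tilde\lambda_t) = \{\tilde\lambda_t, -\}$ is Hamiltonian and generates a smooth family of local Jacobi bundle automorphisms $\psi_t$ covering a contact isotopy $\underline{\smash\psi}_t$ of $(M,C)$ with infinitesimal generator $\calX_{\tilde\lambda_t}$. The congruence above forces $\underline{\smash\varphi}_t(S) = \underline{\smash\psi}_t(S)$, concluding the proof. I expect the main obstacle to be the construction of the line bundle lift $\varphi_t$ with $(D\varphi_t)K_0 = K_t$: this is precisely where integrality is used, and it isolates why non-integral coisotropic deformations can fail to be induced by contact isotopies even when $S$ is compact.
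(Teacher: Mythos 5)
Your proposal is correct and follows essentially the same route as the paper's own argument: the same compactness/integrality-dependent lift $\varphi_t$ with $(D\varphi_t)K_0=K_t$ (cf.\ Remark~\ref{rem:further_assumptions}), the same $1$-form $\beta_t=\varphi_t^\ast(\iota_{\dot\varphi_t}\varpi)$ with $d_D\beta_t$ basic via Lemma~\ref{lem:basic_Atiyah_forms}, the same use of the contracting homotopy~\eqref{eq:contracting_homotopy} to produce $\lambda_t$ and then $\tilde\lambda_t$, and the same conclusion via the Hamiltonian derivation $\Delta_{\tilde\lambda_t}$ and its contact isotopy. Your observation that $\pi^\ast$ intertwines the contracting homotopies even makes the step producing $\lambda_t$ (e.g.\ $\lambda_t=\iota_{\mathbbm{1}}\beta_t$) slightly more explicit than the paper's phrasing.
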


As a by-product of the above discussion we also get the following.

\begin{proposition}
	\label{prop:unobstructedness}
	The (compact) integral coisotropic deformation problem, under contact equivalence, is unobstructed and its moduli space is discrete.
\end{proposition}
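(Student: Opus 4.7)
The plan is to run the argument preceding Theorem~\ref{prop:rigidity_compact_integral_coisotropic} at the infinitesimal level rather than at the level of $1$-parameter families. Concretely, Proposition~\ref{prop:unobstructedness} actually splits into two independent claims, and I would dispatch them separately.

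\emph{Discrete moduli space.} I would first observe that this part is a direct corollary of Theorem~\ref{prop:rigidity_compact_integral_coisotropic}. Given a compact integral coisotropic $S$ and a smooth integral coisotropic deformation $\underline{\smash\varphi}_t$ of it, the rigidity theorem produces a contact isotopy $\underline{\smash\psi}_t$ of $(M,C)$ with $\underline{\smash\varphi}_t(S)=\underline{\smash\psi}_t(S)$. Thus the orbit of $S$ under contact equivalence is open in the space of integral coisotropic deformations, and the moduli space has $[S]$ as an isolated point. Since the base point $S$ is arbitrary, the moduli space is discrete.

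\emph{Unobstructedness.} For this half I would argue that every infinitesimal integral coisotropic deformation $s$ of $S$ is the $t=0$ derivative of some contact isotopy applied to $S$. Fix a tubular neighborhood $\tau\colon M\to S$ with $L=\tau^\ast L_0$, and identify $s$ with its infinitesimal generator $\dot\varphi_0\in\Gamma(DL|_S)$ via the choice of an infinitesimal line bundle lift compatible with the reduction morphism $\pi\colon L_S\to L_B$ (such a lift exists because the infinitesimal integrality condition built into $s$ means $D\pi\circ\dot\varphi_0$ factors through $\underline{\pi}$, mirroring assumption~\ref{enumitem:rigidity_assumption_2} at $t=0$). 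Setting $\beta:=\iota(\dot\varphi_0)\varpi\in\Omega^1_{L_0}$, the same integrality condition gives $\iota_\square\,d_D\beta=\calL_\square\,d_D\beta=0$ for all $\square\in\Gamma(\ker D\pi)\subset\calD L_0$. By acyclicity of the der-complex~\eqref{eq:contracting_homotopy} together with Lemma~\ref{lem:basic_Atiyah_forms}, there exists $\lambda\in\Gamma(L_0)$ with $\beta-d_D\lambda$ basic, hence annihilating $K_0$; pulling back through $\tau$ yields $\tilde\lambda\in\Gamma(L)$ such that $\dot\varphi_0\equiv (\varpi^\flat)^{-1}(d_D\tilde\lambda)|_S \mod\calD L_S$. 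The Hamiltonian derivation $\Delta_{\tilde\lambda}=\{\tilde\lambda,-\}$ then integrates to a contact isotopy $\underline{\smash\psi}_t$ of $(M,C)$, and $S_t:=\underline{\smash\psi}_t(S)$ is a smooth integral coisotropic deformation of $S$ whose infinitesimal generator equals $s$ modulo trivial deformations. This prolongs $s$ and establishes unobstructedness.

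The main technical obstacle, analogous to Remark~\ref{rem:further_assumptions}, is making sure that the infinitesimal integrality hypothesis on $s$ really delivers the vanishing~\eqref{eq:infinitesimal_integral_coisotropic_deformations} at $t=0$: this is precisely the feature that lets one invoke Lemma~\ref{lem:basic_Atiyah_forms} to extract the potential $\tilde\lambda$, and it is where the argument would fail for a merely coisotropic (non-integral) infinitesimal deformation, consistently with Section~\ref{subsec:example_1st_part}. Once this step is secured, compactness of $S$ ensures the Hamiltonian flow of $\tilde\lambda$ is defined for small times and the desired prolongation follows.
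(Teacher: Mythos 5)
Your proposal is correct and takes essentially the same route as the paper: the paper likewise encodes infinitesimal integral coisotropic deformations as classes $[\square]\in\Gamma((DL)|_S)/\calD L_S$ with $d_D\bigl(i_S^\ast(\iota(\square)\varpi)\bigr)\in\pi^\ast(\Omega^2_{L_B})$ and reruns the argument leading to~\eqref{eq:rigidity_final} (acyclicity~\eqref{eq:contracting_homotopy} plus Lemma~\ref{lem:basic_Atiyah_forms}) to conclude every such class is Hamiltonian, whence unobstructedness and triviality of the infinitesimal moduli. Your extraction of discreteness directly from Theorem~\ref{prop:rigidity_compact_integral_coisotropic} is only a cosmetic repackaging, since both statements are by-products of that same discussion in the paper.
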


\begin{proof}
	From~\eqref{eq:infinitesimal_integral_coisotropic_deformations} it follows that the infinitesimal integral coisotropic deformations of $S$ are encoded into the vector subspace $\frakB_S\subset\Gamma((DL)|_S)/\calD L_S$ formed by equivalence classes modulo $\calD L_S$ of sections $\square\in\Gamma((DL)|_S)$ such that $d_D\beta\in\pi^\ast(\Omega^2_{L_B})$, where $\beta:=i_S^\ast(\iota(\square)\varpi)\in\Omega^1_{L_S}$.
	The same argument used to get~\eqref{eq:rigidity_final} implies now that, modulo $\calD L_S$, any $\square$ of this kind agrees with an Hamiltonian derivation $\Delta_{\tilde\lambda}$ of $(J,L)$, for some $\tilde\lambda\in\Gamma(L)$.
\end{proof}

\begin{remark}
	It is possible to compare Proposition~\ref{prop:unobstructedness} with the analogue result in the symplectic case (cf.~\cite{ruan2005deformation}).
	On the symplectic side, for a compact integral coisotropic submanifold $S$ with characteristic foliation $\calF$, its local moduli space under Hamiltonian equivalence consists of the elements of $H^1(\calF)$ that, seen as sections of a vector bundle over $B=S/\calF$, are flat wrt the Gauss--Manin connection.
	Similarly on the contact side, for a compact integral coisotropic submanifold $S$ with characteristic foliation $\calF$, the local moduli space of $S$ under contact equivalence consists of the elements of $H^1(\calF;L_S)$ that, seen as sections of a vector bundle over $B$, are flat wrt a certain connection along $DL_B$, but now it turns out that there are no non-zero flat sections.
\end{remark}

\section*{Acknowledgements}
	The author is grateful to A\"issa Wade for her help with an earlier version of this note, and to Luca Vitagliano and Marco Zambon for their comments and suggestions.



\end{document}